\DeclarePairedDelimiterX{\Paren}[1]{(}{)}{#1}
\DeclarePairedDelimiterX{\Brace}[1]{\{}{\}}{#1}
\DeclarePairedDelimiterX{\Brack}[1]{[}{]}{#1}
\DeclarePairedDelimiterX{\Abs}[1]{\rvert}{\lvert}{#1}
\DeclarePairedDelimiterX{\Norm}[1]{\lVert}{\rVert}{#1}
\DeclarePairedDelimiterX{\Avg}[1]{\langle}{\rangle}{#1}
\DeclarePairedDelimiterX{\Inner}[2]{\langle}{\rangle}{#1\,\delimsize\vert\,#2}
\newcommand*{\delimsize}{}
\DeclareMathOperator*{\argmin}{arg\,min}
\DeclareMathOperator{\Diag}{diag}
\DeclareMathOperator{\Prox}{prox}
\newcommand*{\TransposeLetter}{\mathrm{T}}
\newcommand*{\T}{^{\TransposeLetter}}
\newcommand*{\V}[1]{\boldsymbol{#1}}
\newcommand*{\M}[1]{\mathbf{#1}}
\newcommand*{\subM}[1]{\boldsymbol{#1}}
\newcommand*{\Mcal}[1]{{\mathcal{#1}}}
\newcommand{\cX}{{\mathcal X}}
\newcommand{\cC}{{\mathcal C}}
\newcommand{\bR}{{\mathbb R}}
\newcommand{\ps}[1]{\langle #1\rangle}
\newcommand*{\Set}[1]{\mathbb{#1}}
\newcommand*{\Reals}{\Set{R}}
\newcommand*{\Zero}{\V 0}
\theoremstyle{plain} 
\newtheorem{lemma}{Lemma}[section]
\theoremstyle{definition}
\theoremstyle{remark}
\begin{document}

\title{Distributed Deblurring of Large Images of Wide Field-Of-View}

\author{Rahul~Mourya, 
	Andr\'{e}~Ferrari, 
	R\'{e}mi~Flamary, 
	Pascal~Bianchi, 
	and~C\'{e}dric~Richard~ 
	\thanks{The work was supported in part by Agence Nationale pour la Recherche (ODISSEE project, ANR-13-ASTR-0030 and MAGELLAN project, ANR-14-CE23-0004-01).}
	\thanks{Rahul~Mourya and Pascal~Bianchi are with LTCI, T\'{e}l\'{e}com ParisTech, Universit\'{e} Paris-Saclay, F-75013, Paris, France (e-mail:firstname.lastname@telecom-paristech.fr).}
	\thanks{R\'{e}mi~Flamary, Andr\'{e}~Ferrari and C\'{e}dric Richard are with Lagrange Laboratory, Universit\'{e} de Nice Sophia Antipolis, F-06108 Nice, France (email:firstname.lastname@unice.fr, and ferrari@unice.fr).}
	\thanks{A preliminary shorter version of this work has been submitted to the conference EUSIPCO 2017.}}


\markboth{Submitted to IEEE Trans. on Image Processing, 2017}
{Mourya \MakeLowercase{\textit{et al.}}: Distributed Nonblind Deblurring of Large Images of a Wide Field-Of-View}
%

\IEEEpubid{xxxx--xxxx/xx\$00.00~\copyright~2017 IEEE}


\maketitle

\begin{abstract}
Image deblurring is an economic way to reduce certain degradations (blur and noise) in acquired images. Thus, it has become essential tool in high resolution imaging in many applications, e.g., astronomy, microscopy or computational photography. In applications such as astronomy and satellite imaging, the size of acquired images can be extremely large (up to gigapixels) covering wide field-of-view suffering from shift-variant blur. Most of the existing image deblurring techniques are designed and implemented to work efficiently on centralized computing system having multiple processors and a shared memory. Thus, the largest image that can be handle is limited by the size of the physical memory available on the system. In this paper, we propose a distributed nonblind image deblurring algorithm in which several connected processing nodes (with reasonable computational resources) process simultaneously different portions of a large image while maintaining certain coherency among them to finally obtain a single crisp image. Unlike the existing centralized techniques, image deblurring in distributed fashion raises several issues. To tackle these issues, we consider certain approximations that trade-offs between the quality of deblurred image and the computational resources required to achieve it. 
The experimental results show that our algorithm produces the similar quality of images as the existing centralized techniques while allowing distribution, and thus being cost effective for extremely large images. 
\end{abstract}

\begin{IEEEkeywords}
	Distributed optimization, Proximal projection, Consensus, Message-Passing-Interface, Shift-variant blur, Inverse problems, Image restoration.
\end{IEEEkeywords}

\section{Introduction}
\label{sec:introduction}
\IEEEPARstart{I}n many applications, it is essential to have high resolution images for precise analysis and inferences from them. However, a certain amount of degradations (blur and noise) are unavoidable in many of the imaging systems due to several factors, such as the limited aperture size, the involved medium (both atmosphere and optics), and the imaging sensors. With advances in the computational technologies, the digital image restoration techniques, such as image deblurring, have been proven to be an economic way to enhance resolution, signal-to-noise ratio, and contrast of the acquired images. In this paper we can consider the situation where we assume that the information about blur and the statistics of noise is know a priori, and then the restoration is referred to as nonblind deblurring. In general, image deblurring is an ill-posed inverse problem \cite{titterington1985general,demoment1989image}. In a Bayesian setting it can be expressed as \emph{maximum-a-posteriori} estimation problem \cite{richardson1972bayesian}, which boils down to the following numerical optimization problem:
\begin{align}
\label{Eq:generic_img_deblurring_problem}
\V{x}^{\ast} := \argmin_{\V{x}} \{ \varPsi_{\text{data}} (\V{y}, \V{x}) + \lambda \ \varPsi_{\text{prior}} (\V{x}) \}
\end{align}
where the vectors $\V{y} \in \Reals^{m}$ and $\V{x} \in \Reals^{n}$ represent the observed (acquired blurry and noisy) image, and the unknown crisp image to be estimated, respectively. Two dimensional (2D) images are represented as column vectors by lexicographically ordering their pixels values. The first term $\varPsi_{\text{data}}$ in (\ref{Eq:generic_img_deblurring_problem}) is called \emph{likelihood} or data-fidelity term that depends upon the noise and the image formation model. The second term $\varPsi_{\text{prior}}$ is called \emph{a priori} or regularizer that imposes any prior knowledge on the unknown image $\V{x}$. The scalar parameter $\lambda$ keeps trade-off between the \emph{likelihood} and \emph{a priori} terms, which generally depends upon the noise level in the observed image. 

\IEEEpubidadjcol

Blur in an acquired image is characterized by the impulse response of the imaging system, commonly known as point-spread-function (PSF). As far
as a narrow field-of-view is concerned, PSF can be considered constant throughout the field-of-view leading to a shift-invariant blur. The
blurring operation in this case is modeled as simple a convolution between the sharp image and the PSF, and can be performed efficiently in Fourier
domain. In many cases, however, blur varies throughout the field-of-view due to several causes: relative motion between the camera and the scene; moving objects with respect to the background; variable defocussing of non-planar scenes with some objects located in front or behind the in-focus plane; optical aberrations such as space-variant distortions, vignetting or phase aberrations. In here we consider shift-variant blur across the field-of-view of a planar scene, i.e. all the objects in the scene lie in-focus plane; see \cite{porter1984compositing} for discussion on shift-variant blur in the case of non-planar scene. Two subtly different situations can be considered for shift-variant blur in planar scene: i) a large field-of-view is captured into several pieces by multiple imaging systems each capturing a small portion of the whole scene, and ii) the same scene is captured by a single imaging system having wide field-of-view. 
In the former case, each imaging system can have slightly different blur from each other operating under different settings. Thus, the captured images can have slightly different blurs from each other resulting into piece-wise constant blur in the whole scene. We will refer to this situation as piece-wise constant shift-variant blur. In the latter case, the blur in the whole field-of-view can vary smoothly, e.g. blur due to optical aberrations. We will refer to this situation as smooth shift-variant blur. The shift-invariant and piece-wise constant shift-variant blurs can be considered as special cases of the smooth shift-variant blur. The blurring operation in the smooth shift-variant case cannot be modeled by a simple convolution, and there does not exists any efficient and straight-forward way to perform the operation. However, there are some fast approximations of smooth shift-variant blur operators proposed in \cite{Nagy1998,Hirsch2010,Denis2011}; see \cite{Denis2015} for detailed comparison between different approximations. 

If we approximate the noise in observed image by a non-stationary white Gaussian noise as considered in \cite{mugnier2004mistral} (see \cite{Lanteri2005, Benvenuto2008, Chouzenoux2015} for more refined noise model), then a discrete image formation model can
be written as:
\begin{equation}
\label{Eq:image_formation}
\V{y} = \M{H} \ \V{x} + \V{\varepsilon}
\end{equation}
where the matrix $\M{H} \in \Reals^{m\times n}, \ m < n$, denotes the blur operator\footnote{Blur operator is a rectangular matrix since size of observed image is restricted by the physical size of the image sensor, i.e., sensor captures little less than what optics can see.}, and $\V{\varepsilon}$ is the zero-mean non-stationary white Gaussian noise, i.e. $\V{\varepsilon}(\ell) \sim \mathcal{N}(0, \V{\sigma}^{2}(\ell))$ with $\V{\sigma}^{2}(\ell)$ denoting the noise variance at $\ell$th component of the vector $\V{y}$. We assume that the raw captured image is preprocessed to yield an image that closely follows the above image formation model (\ref{Eq:image_formation}). The preprocessing may includes the correction of the background and of the flat field, the correction of defective pixels and possibly of its correlated noise, and the scaling of the image in photons.

Before we proceed further, let us introduce some more notations. Hereafter, we will use upper-case bold and lower-case bold letters for denoting matrices (and linear operators), and column vectors, respectively. For every set $\Mcal{T}$, we denote by $\arrowvert \Mcal{T}\arrowvert$ the cardinality of the set, and by $\Reals^{\Mcal{T}}$ the set of functions on $\Mcal{T}\to\Reals$. Let $\Mcal{X}$ represents Euclidean space, then we denote by $\ps{\,.\,,\,.\,}$ the standard inner product on $\Mcal{X}$, and by $\|\,.\,\|$ the Euclidean norm. Letting $\M{V}$ denote a positive definite linear operator of $\cX$ onto itself, we use the notation $\ps{\V{x},\V{y}}_\M{V}=\ps{\V{x},\M{V}\V{y}}$, and we denote by $\|\,.\,\|_\M{V}$ the corresponding norm. When $\M{V}$ is a diagonal operator of the form $(\M{V}\V{x}):\ell \mapsto {\V{\alpha}}(\ell) \ {\V{x}}(\ell)$ we equivalently denote  $\|\V{x}\|_\M{V}^2$ by $\|\V{x}\|_{\V{\alpha}}^{2}=\sum_\ell \V{\alpha}(\ell)\V{x}(\ell)^2$, where $\V{\alpha} = (\alpha_1, \alpha_2,\cdots)$. We also denote $\| \V{x}\|_{1,\M{V}}=\sum_\ell \V{\alpha}(\ell) \ |\V{x}(\ell)|$ the weighted $L_1$-norm, simply noted $\|\V{x}\|_1$ when $\M{V}$ is the identity. Moreover, if $\M{L}$ is a linear operator, we denote by $\M{L}^{\T}$ its adjoint operator. 

Considering the above image formation model (\ref{Eq:image_formation}) with a known PSF, the (nonblind) image deblurring problem (\ref{Eq:generic_img_deblurring_problem}) can be explicitly expressed as:
\begin{align}
\label{Eq:specific_img_deblurring_problem}
\V{x}^{\ast} := \argmin_{\V{x} \geq \Zero} \{ \frac{1}{2} \| \V{y} - \M{H} \ \V{x} \|^{2}_{\M{W}} + \lambda \ \phi(\M{D} \ \V{x}) \}
\end{align} 
where the matrix $\M{W}$ is diagonal with its components given by $\M{W}(\ell,\ell) = 1 / \V{\sigma}^{2}(\ell)$ for observed pixels and $\M{W}(\ell,\ell) = 0$ for unmeasured pixels. The function $\phi$ represents some regularizer (e.g., Tikhonov's $L_{2}$-norm, sparsity promoting $L_{1}$-norm, or Huber's mixed $L_{1}$-$L_{2}$-norm), and $\M{D}$ is some linear operator, e.g., finite forward difference or some discrete wavelet transform. The formalism (\ref{Eq:specific_img_deblurring_problem}) of image deblurring problem is referred to as analysis-based approach where the unknown variable is expressed in the image domain itself. An alternative formalism, referred to as synthesis-based approach, is also considered in literature where the unknown variable is in some transformed domain, e.g., coefficients of some discrete wavelet; see \cite{Elad2007} for detailed discussion on the two approaches and their comparisons. Without loss of generality, in this paper we consider only the analysis-based approach for expressing the image deblurring problem.

Depending upon the structures of the two terms in the optimization problem (\ref{Eq:specific_img_deblurring_problem}), the solution can be obtained in a single step i.e., a closed-form solution (e.g., Wiener deconvolution), or one has to rely on iterative solvers, which may require, at each iteration, the same computational expense or even more than the closed-form solution\footnote{Applying blur operator $\M{H}$ or its adjoint $\M{H}^{\T}$ is an expensive operation, and one may require to apply them several times per iterations of the solver.}. The latter one is the frequently occurring case in many applications, e.g., the regularization term is not differentiable (nonsmooth). Though, there exists a vast literature on numerical optimization, but we refer the readers to \cite{Beck2009,Afonso2010,Matakos2013,Mourya2015} and the references therein for some of the recent and fast optimization algorithms specially developed for inverse problems in imaging.

\subsection{Motivation: Deblurring Extremely Large Image}
\label{subsec:problem}
With the advances in imaging technologies, the applications in astronomy, satellite imagery and others are able to capture extremely large image of wide field-of-view. The size of such images can vary from few tens megapixels up to gigapixels. As discussed previously, we consider two imaging situations where such large images are acquired: piece-wise constant and smooth shift-variant blur. In the former situation, the images from multiple narrow field-of-view imaging systems are supposed to be mosaicked together into a single image provided that their relative positions within the whole field-of-view are known. For further simplification, we assume that all the narrow field-of-view imaging systems have the same noise performance. In the both imaging situations, it is highly desirable to obtain a single crisp image from the acquired image(s) for any further applications. 

Image deblurring is well studied topic and a vast literature exists \cite{richardson1972bayesian, titterington1985general, demoment1989image, Thiébaut2005, hansen2006deblurring, Benvenuto2008, Figueiredo2009, Matakos2013} for moderate size (few megapixels) images with shift-invariant blur.
These methods differ from each other in the way they formulate the data-fidelity term or the regularization term, and the optimization algorithms they propose to solve the resulting problem. The image deblurring problem becomes more complicated when one considers the images suffering from shift-variant blur. Some recent works \cite{Nagy1998,Hirsch2010,Denis2011,Denis2015} proposed efficient methods for deblurring images with shift-variant blur. However, all the methods listed above are designed and implemented to work efficiently on a centralized computing system, possibly with multiple processor cores sharing a single memory space. Such a computing system is commonly referred to as multi-threaded shared memory system based on single instruction multiple data (SIMD) architecture. Hereafter, we will refer to all such existing methods by \emph{centralized} deblurring techniques. In contrast to deblurring a moderate size image, deblurring an extremely large image is a challenging problem since the largest size of the image that can be handled is then limited by the capacity of shared memory available on the centralized system. It is not cost effective to build a centralized system with several processor cores and a huge shared memory when the modern distributed computing systems (consist of several connected processing nodes each having reasonable amount of computational resources) are proving to be far more economical way for solving huge-scale problems than the centralized system. Many domains such as machine learning, data mining \cite{bekkerman2011scaling}, and others are already benefiting from distributed computing approach for discovering patterns in huge datasets distributed at different nodes. As per our knowledge, there are very few works, e.g., \cite{cui2013distributed, wang2013distributed, ferrari2014distributed, meillier2016two}, considering distributed computing approach for image restoration/reconstruction problems. Within this context, we propose a distributed image deblurring algorithm for large images acquired from either of the imaging situations mentioned above.

\subsection{Our contributions}
\label{subsec:contributions}
In this paper, we propose a distributed algorithm for deblurring large images, which needs less frequent inter-node communication. Our algorithm can handle the acquired image(s) from both the imaging situations: smooth shift-variant and piece-wise constant shift-variant blur. For the first imaging situation, we consider splitting the large image into sufficiently small overlapping blocks and then deblur them simultaneously on the several processing nodes while maintaining certain coherencies among them so as to obtain a single crisp image without any transition artifacts among the deblurred blocks. For the second imaging situation, we assume that the relative positions of narrow images in the whole field-of-view are known, and there are certain overlaps among them. If a narrow image is not sufficiently small to fit in the memory of a node, then we can split it further into smaller overlapping blocks. Similar to the first situation, we deblur the small images simultaneously to obtain a single crisp image. To do so, we reformulate the image deblurring problem (\ref{Eq:generic_img_deblurring_problem}) into a distributed optimization problem with consensus, and then present an efficient optimization method to solve it. Our distributed deblurring algorithm is rather generic in the sense that it can handle different situations such as shift-invariant or shift-variant blur with any possible combination of the data-fidelity and regularization terms. Depending on the structures of the data-fidelity and regularizer, we can select any fast optimization algorithm for solving the local deblurring problems at processing nodes. By several numerical experiments, we show that our algorithm is cost effective in term of computational resources for large images, and is able obtain the similar quality of deblurred images as the existing \emph{centralized} deblurring techniques that are applicable only to images of moderate size.

The remaining parts of the paper is organized as follows. In Section \ref{sec:distributed_image_deblurring}, we discuss the difficulties associated with distributed formulations of image deblurring problem, and the possible approaches to overcome it. In Section \ref{sec:proposed_approach}, we present the distributed formulation and an efficient distributed algorithm to solve it. We discuss the criteria for selecting the different parameters associated with our approach. In Section \ref{sec:numerical_experiments_results}, we discuss the
implementation details and present several numerical experiments where we compare the results obtained from different approaches. Finally, in
Section \ref{sec:conclusion}, we conclude our work with possible future enhancements.

\section{Distributed Computing Approach for Image Deblurring}
\label{sec:distributed_image_deblurring}
A generic approach for dealing with large-scale problem is the ``divide and conquer'' strategy, in which a large-scale problem is decomposed into smaller subproblems in such way that solving them and assembling theirs results would produce the expected final result or, at least, reasonably close to it. Distributed computing approach has emerged as a framework of such a strategy. Distributed computing systems are consist of several processing nodes, each having a reasonable amount of computational resources (in term of memory and processor cores), connected together via some communication network so that nodes can exchange messages to achieve a certain common goal. They are built based on the multiple instructions multiple data (MIMD) architecture. Nodes in a distributed system may not be necessarily located at the same physical location, so a high speed communication links may not be always feasible among them. Thus, an efficient distributed algorithm is the one which is computation intensive rather than communication demanding. In many applications such as machine learning, data mining, distributed approaches have become de-facto standard for efficiently estimating extremely large number of parameters using huge dataset distributed on the different nodes. Taking such an inspiration, one can devise a distributed strategy for deblurring extremely large images. A possible approach is to use the distributed array abstraction available on modern distributed systems, and reimplement the standard \emph{centralized} deblurring techniques using distributed arrays instead of a shared memory array. However, the bottleneck of such an approach would be extensive data communication among the nodes at each iteration of the optimization algorithm. To overcome this, a straightforward approach would be to split the given observed image $\V{y}$ into $N$ smaller contiguous blocks $\V{y}_{i} \in \Reals^{m_{i}}, i=1,2,\cdots,N $, and deblur them independently to obtain the deblurred blocks $\hat{\V{x}}_{i} \in \Reals^{n_{i}}, i=1,\cdots,N$ that can be merged to obtain the single deblurred image $\hat{\V{x}} \in \Reals^{n}$. However, there may arise several issues from both the theoretical and practical point-of-views as discussed below. 

\subsection{Issues with Distributed Approach for Image Deblurring}
\label{subsec:the_difficulties}
For sake of simplicity, let us first consider the shift-invariant image deblurring problem, and point out some of the major issues:
\begin{enumerate}
	\item Practically, it is infeasible to explicitly create the blur operator matrix $\M{H}$. For this reason blur operation is efficiently performed in Fourier domain using Fast Fourier Transforms (FFT) algorithm. Thus, there is no straight forward way to split the matrix $\M{H}$ into block matrices $\subM{H}_{i} \in \Reals^{m_{i}\times n_{i}}$ so that they could be used to perform independent deblurring of the observed blocks $\V{y}_{i}$ such that the final result would be equivalent to using the original matrix $\M{H}$ on whole image $\V{y}$. Any approximation of block matrices $\subM{H}_{i}$ chosen not appropriately may create artifacts such as nonsmooth transition among the deblurred blocks.
	\item Blur operation performed in Fourier domain assumes circular boundary condition. The circulant blur assumption in image deblurring process can lead to severe artifacts due to discontinuity at boundaries caused by the periodic extension of image \cite{Matakos2013}. Thus, deblurring the individual blocks $\V{y}_{i}$ can introduce boundary artifacts into them that will eventually worsen the quality of final deblurred image.
	\item Splitting the image into blocks can also raise issues with the regularization term. Many frequently used regularizers are not separable, i.e., its value on the whole image is not equivalent to the sum its value on the blocks. Such an approximation can result into structural incoherencies among the deblurred blocks.
\end{enumerate}
The issues mentioned above complicate further the shift-variant image deblurring problem, since there is no any efficient and straightforward way to formulate shift-variant blur operator. Considering the above issues associated with the na\"{\i}ve ``split, deblur independently, and merge'' approach, it is obvious that one needs to approximate the problem (\ref{Eq:specific_img_deblurring_problem}) in a way so that it can be solved efficiently in a distributed manner, yet obtain the solution that remains reasonably close to the solution of the original problem (\ref{Eq:specific_img_deblurring_problem}). In the next section, we describe how to tackle the above issues, and then we will present our distributed image deblurring algorithm.

\subsection{Tackling the Issues in Distributed Image Deblurring}
\label{subsec:tackling_the_difficulties}
As discussed above, it is nontrivial problem to explicitly create a blur operator and then decompose it into block matrices to operate them independently for distributed image deblurring. However, we can formulate an approximation of the block matrices $\subM{H}_{i}$ such that when they are used in deblurring process, certain homogeneities among the deblurred blocks are imposed. Let us consider the smooth shift-variant blur case. Provided that we are able to sample $N$ local PSFs $\V{h}_{i}, i=1,\cdots, N$ at regular grid points within the field-of-view, the approximation of shift-variant blur operator proposed in \cite{Hirsch2010,Denis2011} suggests an interesting idea to formulate the block matrices $\subM{H}_{i}$. Their approximations of the shift-variant blur operator is based upon the idea that a PSF at any point within field-of-view can be well approximated by linear combination (interpolation) of the PSFs sampled at the neighboring grid points. Said so, the shift-variant blurring operation can be written as:
$\V{y} = \M{R} \sum_{i=1}^{N} \subM{Z}_{i} \subM{H}_{i} \subM{W}_{i} \subM{C}_{i} \V{x}$, where $\subM{C}_{i}$ are chopping operators that select overlapping blocks from the image $\V{x}$, $\subM{W}_{i} = \Diag(\V{\omega}_{i})$ are interpolation weights corresponding to each blocks, $\subM{H}_{i}$ are blur operators corresponding to the sampled local PSFs $\V{h}_{i}$, $\subM{Z}_{i}$ are operators that zero-pads the corresponding blurred blocks keeping it in the same relative position with respect to the whole image, and $\M{R}$ is a chopping operator that restrict the final blurred image to sensor size. Interpolation weights $\V{\omega}_{i}$ are non-zero only within certain locality depending upon the order of interpolation. This approximation allows to perform shift-variant blurring combining several local shift-invariant blurring, which can be performed efficiently in Fourier domain. This approximation renders smooth variation of blur throughout the whole image. Higher accuracy in blurring operation can be achieved by denser sampling of PSFs with the field-of-view. The image deblurring results presented in \cite{Hirsch2010, Denis2015} suggest that the first-order interpolation (e.g., $\V{\omega}_{i}$ is a ramp within the range $[0, 1]$ between the two adjacent grid points for 1D signal as illustrated in Fig.\ref{fig:demonstration1}) is sufficient to achieve reasonably good quality of deblurred images. Using first-order interpolation, the shift-variant blur operator is only four times expensive than the shift-invariant blur operation on the same size of image. However, this fast approximation of shift-variant blur operator is efficient only for a centralized multi-threaded shared memory implementation. It is not directly applicable for distributed setting as it will lead to intensive data communication among the nodes each time the blur operator or its adjoint is used by an iterative solver. Nevertheless, this approximation suggests the following idea for distributed image deblurring: i) split the observed image $\V{y}$ into $N$ overlapping blocks $\V{y}_{i}$, ii) generate 2D first-order interpolation weights $\V{\omega}_{i}$ corresponding to each block, iii) provided with $N$ PSFs $\V{h}_{i}$ sampled locally within each block, distribute the observed blocks, the PSFs, and the interpolation weights among the $N$ nodes, and iv) then on each node perform local image deblurring while maintaining certain consensus among the overlapping pixels of the adjacent blocks. The consensus operation can be a weighted averaging among the overlapping pixels. Using the interpolation weights $\V{\omega}_{i}$ for averaging operation will indirectly impose the smooth variation of blur across whole observed image. Without loss of generality, the aforementioned strategy is also applicable to the cases when an image suffers from shift-invariant or piece-wise constant shift-variant blur since these two operations can be well approximated by the above fast shift-variant blur operator for the smooth blur variation.

As pointed out above, performing local deblurring in Fourier domain may lead to boundary artifacts, thus to avoid such artifacts in the deblurred blocks, we borrow the idea from \cite{Matakos2013}. We express the local blur operator as: $\subM{H}_{i} = \boldsymbol{\mathcal{C}}_{i} \boldsymbol{\mathcal{H}}_{i}, i=1,\cdots, N$, where $\boldsymbol{\Mcal{H}}_{i} \in \Reals^{n_{i} \times n_{i}}$ are circular convolution matrices formed from PSFs $\V{h}_{i}$, and $\boldsymbol{\mathcal{C}}_{i} \in \Reals^{m_{i} \times n_{i}}$ are chopping operators, which restrict the convolution results to the valid regions. Expressing the local blur operators in this form allows an efficient non-circulant blur operation in Fourier domain while suppressing boundary artifacts in the deblurred patches.

Concerning the last issue about approximating regularizer on the whole image by sum of it on blocks, it depends upon the structure of regularizer. If the regularizer is separable in image domain, e.g., $\phi(\M{D} \V{x}) = \| \V{x} \|_{1} \ \text{or} \ \| \V{x} \|^{2}_{2}$, then the aforesaid approximation holds. However, for many of the frequently used regularizers, e.g., $\|\M{D}\V{x} \|^{2}_{2}$ or $\| \M{D}\V{x} \|_{1}$ or $\| \M{D}\V{x} \|_{2}$ where $\M{D}$ represents finite-forward difference operator or some discrete wavelet transform, the aforementioned approximation does not hold. This can lead to incoherencies among the deblurred blocks, which can render nonsmooth transitions among them. However, as shown hereafter a sufficient amount of overlaps among the adjacent blocks with aforementioned consensus imposed on the overlapping pixels during deblurring process will limit the incoherencies, and suppress any nonsmooth transition in the final deblurred image.

\section{The Proposed Approach}
\label{sec:proposed_approach}
Considering all the ideas developed in Section \ref{sec:distributed_image_deblurring}, we propose a generic framework for distributed image deblurring applicable to images suffering from both the shift-invariant and shift-variant blur. We make some reasonable approximations, and reformulate the original problem (\ref{Eq:specific_img_deblurring_problem}) into the distributed optimization problem presented below. The resulting optimization problem is solved in a distributed manner by Douglas-Rachford (D-R) splitting algorithm that first appeared in \cite{Lions1979}.
\subsection{General Setting}
Consider a distributed computing system with a set of $N$ nodes having peer-to-peer bidirectional connections among them, or at least connection topology shown in Fig. \ref{fig:demonstration2}. Given an observed image $\V{y} \in \Reals^{m}$, we split it into $N$ blocks $\V{y}_{i} \in \Reals^{m_{i}}, \forall i=1,2,\cdots,N$, with certain overlaps among them. We generate 2D first-order interpolation weights $\V{\omega}_{i}$ corresponding to the observed blocks as shown in Fig.~\ref{fig:shiftinvariant_simulation}(e). Provided that we are able to sample $N$ PFSs $\V{h}_{i}$ within the blocks, and $\subM{H}_{i} \in \Reals^{m_{i} \times n_{i}}$ be the corresponding blur operators, we distribute the observed blocks, the corresponding PSFs, and the interpolation weights among the nodes. We, then, seek to distributively estimate the whole unknown crisp image $\V{x} \in \Reals^{n}$. Let us denote by $\Mcal{P}_1,\dots,\Mcal{P}_N$ a collection of $N$ subsets of $\{1,\dots,n\}$. For every $i=1,\dots, N$, we assume that $i$th compute node is in charge of estimating the components of $\V{x}$ corresponding to the indices $\Mcal{P}_{i}$. The subsets $\Mcal{P}_1,\dots, \Mcal{P}_N$ are overlapping. Hence, different nodes handling a common component of $\V{x}$ must eventually agree on the value of the latter. Formally, we introduce the product space $\Mcal{X}:=\Reals^{\Mcal{P}_1}\times\cdots\times\Reals^{\Mcal{P}_N}$, and we denote by $\Mcal{C}$ the set of vectors $(\V{x}_1,\dots,\V{x}_N)\in \Mcal{X}$ satisfying the restricted consensus condition
$$
\forall (i,j)\in\{1,\dots,N\}^2, \, \forall \ell\in \Mcal{P}_i\cap \Mcal{P}_j,\, \V{x}_i(\ell)= \V{x}_j(\ell)\,.
$$
Moreover, we assume that every $i$th node is provided with a local convex, proper and lower semicontinuous  function $f_i:\Reals^{\Mcal{P}_i}\to (-\infty,+\infty]$. We consider the following constrained minimization problem on $\Reals^{\Mcal{P}_1}\times\cdots\times\Reals^{\Mcal{P}_N}$:
\begin{equation}
\label{Eq:genericpb}
\argmin_{\V{x}_1\cdots \V{x}_N} \sum_{i=1}^Nf_i(\V{x}_i)\ \text{s.t. } {(\V{x}_1,\dots,\V{x}_N)\in \Mcal{C}}\,.
\end{equation}
For our image deblurring problem, the local cost function $f_{i}$ is composed of the local data-fidelity term $$\V{x}_{i}\mapsto \frac 12 \|\V{y}_i - \subM{H}_{i} \ \V{x}_{i} \|^2_{\M{W}_i},$$ for some positive definite $\M{W}_i(\ell, \ell) = 1 / {\V{\sigma}_{i}^{2}(\ell)}$ as in (\ref{Eq:specific_img_deblurring_problem}), and a regularizer $\phi_{i}( \M{D}_{i} \ \V{x}_{i})$ with positivity constraint on $\V{x}_{i}$, i.e., $\V{x}_{i} \in \Reals^{\Mcal{P}_i}_{+}$. If $\Mcal{A}$ is a set, the notation $\iota_{\Mcal{A}}$ stands for the indicator function of the set $\Mcal{A}$, equal to zero on that set and to $+\infty$ elsewhere. Thus, the local cost function needed to be minimized at each of the nodes have the form:
$$
f_i(\V{x}_{i}) = \frac 12\|\V{y}_i - \subM{H}_{i} \ \V{x}_{i} \|^2_{\M{W}_i} + \lambda_{i} \ \phi_{i}(\M{D}_{i} \ \V{x}_{i}) + \iota_{\Reals^{\Mcal{P}_i}_{+}}\left(\V{x}_{i}\right)
$$
where $\phi_{i}$ are convex, proper and lower semicontinuous functions and $\M{D}_{i}$ are linear operators on $\Reals^{\Mcal{P}_i}$.
\subsection{Optimization Algorithm}
\label{subsec:optimization_algorithm}
Before we present our distributed optimization algorithm for solving problem (\ref{Eq:genericpb}), let us introduce one more notation. For any convex, proper and lower semicontinuous function $h:\Mcal{X}\to(-\infty,+\infty]$, we introduce the \emph{proximity operator}
$$
\Prox_{\M{V}^{-1},h}(\V{v}) = \argmin_{\V{w}\in \Mcal{X}} h(\V{w})+\frac{\|\V{w}- \V{v}\|_\M{V}^2}{2}
$$
for every $\V{v}\in \Mcal{X}$.

For solving (\ref{Eq:genericpb}), we consider D-R Splitting algorithm, thus we reformulate (\ref{Eq:genericpb})
as 
$$
\argmin_{\V{x}\in \Mcal{X}} f(\V{x})+g(\V{x})
$$
where $g=\iota_{\Mcal{C}}$ is the indicator function of $\Mcal{C}$ and $f(\V{x})=\sum_if_i(\V{x}_i)$ for every $\V{x}=(\V{x}_1,\dots,\V{x}_N)$ in $\cX$. 
Let us equip the Euclidean space $\Mcal{X}$ with the inner product $\ps{\,.\,,\,.\,}_\M{V}$ for some positive definite linear operator $\M{V}:\Mcal{X}\to\Mcal{X}$. 
Let $\rho^{(k)}$ be a sequence in $]0, 2[$, and $\V{\epsilon}^{(k)}_{1}$ and $\V{\epsilon}^{(k)}_{2}$ be sequences in $\Mcal{X}$, then the D-R splitting algorithm writes as:
\begin{align*}
\V{x}^{(k+1)} &=\Prox_{\M{V}^{-1},f}(\V{u}^{(k)}) + \V{\epsilon}^{(k)}_{1} \\
\V{z}^{(k+1)} &=\Prox_{\M{V}^{-1},g}(2\V{x}^{(k+1)}- \V{u}^{(k)}) + \V{\epsilon}^{(k)}_{2} \\
\V{u}^{(k+1)} &= \V{u}^{(k)} + \rho^{(k)} \left( \V{z}^{(k+1)}-\V{x}^{(k+1)} \right) \;.
\end{align*}
If the following holds:
\begin{enumerate}
	\item the set of minimizers of (\ref{Eq:genericpb}) is non-empty,
	\item $ \Zero \in \mathrm{ri}(\mathrm{dom}(f)-\cC)$, where $\mathrm{ri}$ represents relative interior,
	\item $\sum_{k} \rho^{(k)}(2 - \rho^{k}) = +\infty$,
	\item and $\sum_{k} \| \V{\epsilon}^{(k)}_{1} \|_{2} + \| \V{\epsilon}^{(k)}_{2} \|_{2} < +\infty$,
\end{enumerate}
then the iterates $\V{u}^{(k)}$ converges weakly to some point in $\Mcal{X}$ and $\V{x}^{(k)}$ converge to a minimizer of (\ref{Eq:genericpb}) as $k\to\infty$; see \cite[Corollary 5.2]{combettes2004solving} for the proof. The parameter $\rho^{(k)}$ is referred to as a relaxation factor, which can be tuned to improve the convergence. The sequences $\V{\epsilon}^{(k)}_{1}$ and $\V{\epsilon}^{(k)}_{2}$ allow some perturbations in the two $\Prox$ operations, which is very useful in the cases when the $\Prox$ operations do not have closed-form solutions and have to rely on some iterative solvers. 

From now onward, we assume that $\M{V}$ is a diagonal operator of the form $\M{V}\V{x} =(\M{V}_1\V{x}_1,\dots,\M{V}_N \V{x}_N)$, where for every $i$, 
\begin{eqnarray*}
	\M{V}_{i} \V{x}_{i}\,:\, \Mcal{P}_{i} &\to&\Reals \\
	\ell&\mapsto& \V{\alpha}_{i}(\ell)\,\V{x}_{i}(\ell)\,,
\end{eqnarray*}
where $\V{\alpha}_i(\ell)$ is a positive coefficient to be specified later.
For every $\ell\in\{1,\dots,n\}$, we introduce the set $\Mcal{P}_\ell^-=\{i\,:\,\ell\in \Mcal{P}_i\}$.
\begin{lemma}
	For every $\V{x}\in\Mcal{X}$, the quantity $\V{z}=\Prox_{\M{V}^{-1}, \iota_{\cC} }(\V{x})$ is such that for every 
	$i\in\{1,\dots,N\}$ and every $\ell\in \Mcal{P}_i$,
	$$
	\V{z}_i(\ell) = \frac{\sum_{i\in \Mcal{P}^-_\ell}\V{\alpha}_i(\ell) \ \V{x}_i(\ell)}{\sum_{i\in \Mcal{P}^-_\ell}\V{\alpha}_i(\ell)}\,. 
	$$
\end{lemma}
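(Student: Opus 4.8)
The plan is to observe that, taking $h=\iota_{\cC}$ in the definition of the proximity operator, $\V{z}=\Prox_{\M{V}^{-1},\iota_{\cC}}(\V{x})$ is simply the orthogonal projection of $\V{x}$ onto the consensus set $\cC$ for the weighted inner product $\ps{\,.\,,\,.\,}_\M{V}$, i.e. the unique minimizer of $\V{w}\mapsto\half\|\V{w}-\V{x}\|_\M{V}^2$ over $\V{w}\in\cC$. This minimizer exists and is unique because $\cC$ is a nonempty closed (linear) subspace of $\Mcal{X}$ and the objective is strongly convex, $\M{V}$ being positive definite. The key structural fact I would then exploit is that $\M{V}$ is \emph{diagonal}, so that this projection splits into independent scalar problems, one per pixel index $\ell$.

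Concretely, I would first expand the penalty along the product structure of $\Mcal{X}$ as $\|\V{w}-\V{x}\|_\M{V}^2=\sum_{i=1}^N\sum_{\ell\in\Mcal{P}_i}\V{\alpha}_i(\ell)\,(\V{w}_i(\ell)-\V{x}_i(\ell))^2$, and reorganize the double sum by pixel, obtaining $\sum_{\ell=1}^n\sum_{i\in\Mcal{P}_\ell^-}\V{\alpha}_i(\ell)\,(\V{w}_i(\ell)-\V{x}_i(\ell))^2$, where I use that $i\in\Mcal{P}_\ell^-$ holds precisely when $\ell\in\Mcal{P}_i$. Next I would parametrize the constraint: a vector $\V{w}$ lies in $\cC$ exactly when, for each $\ell$, the entries $\V{w}_i(\ell)$ coincide across all $i\in\Mcal{P}_\ell^-$; writing $t_\ell$ for this common value and substituting $\V{w}_i(\ell)=t_\ell$ makes the objective separate into the uncoupled one-dimensional problems $\min_{t_\ell}\sum_{i\in\Mcal{P}_\ell^-}\V{\alpha}_i(\ell)\,(t_\ell-\V{x}_i(\ell))^2$. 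Each such problem is a weighted least-squares fit of a single scalar with positive weights $\V{\alpha}_i(\ell)$, so cancelling its derivative gives the weighted average $t_\ell=\bigl(\sum_{i\in\Mcal{P}_\ell^-}\V{\alpha}_i(\ell)\V{x}_i(\ell)\bigr)/\bigl(\sum_{i\in\Mcal{P}_\ell^-}\V{\alpha}_i(\ell)\bigr)$. Since $\V{z}_i(\ell)=t_\ell$ for every $i\in\Mcal{P}_\ell^-$, this is exactly the stated formula.

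I do not expect any serious obstacle here: the argument is a routine weighted-projection computation. The only point that needs genuine care is the reindexing step, namely justifying that a sum originally indexed over the pairs $(i,\ell)$ with $\ell\in\Mcal{P}_i$ can be reorganized into a sum over $\ell$ with $i$ ranging over $\Mcal{P}_\ell^-$, and that this reorganization separates the problem across pixels \emph{because} $\M{V}$ is diagonal. Should $\M{V}$ couple distinct pixel indices, the decoupling — and hence the closed-form weighted average — would break down, so the diagonal assumption is exactly what the result relies on.
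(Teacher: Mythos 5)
Your proof is correct and follows essentially the same route as the paper's: both exploit the diagonal form of $\M{V}$ to decouple the projection onto $\cC$ into independent scalar problems indexed by the pixel $\ell$ (the paper writes $\iota_{\cC}$ as a sum of per-pixel indicators $\iota_{\cC_\ell}$, you parametrize the per-pixel consensus value as $t_\ell$ — the same decomposition in different notation), and then solve each one-dimensional weighted least-squares problem to get the weighted average. Your added remarks on existence/uniqueness of the projection and on the reindexing of the double sum are sound and only make explicit what the paper leaves implicit.
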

\begin{proof}
	For every $\V{x}\in\Mcal{X}$ of the form $\V{x}=(\V{x}_1,\dots,\V{x}_N)$, and every $\ell\in \{1,\dots,n\}$, we use the notation
	$\V{x}_{\Mcal{P}^-_\ell}(\ell) = (\V{x}_i(\ell)\,:\,i\in \Mcal{P}^-_\ell)$. We denote by $\cC_\ell$ the linear span of the vector 
	$(1,\dots,1)$ in $\bR^{|\Mcal{P}^-_\ell|}$, where $|\Mcal{P}^-_\ell|$ is the cardinality of $\Mcal{P}^-_\ell$. 
	The function $g=\iota_\cC$ writes
	$$
	g(\V{x}) = \sum_{\ell=1}^n \iota_{\cC_\ell}(\V{x}_{\Mcal{P}^-_\ell}(\ell))\,.
	$$
	For an arbitrary $\V{x}\in \Mcal{X}$, the quantity $\V{z}=\Prox_{\M{V}^-,g}(\V{x})$ is given by
	\begin{align*}
	\V{z} &= \arg\min_{\V{w}\in \Mcal{X}} \sum_{\ell=1}^{n} \Bigl( \iota_{\cC_\ell}(\V{w}_{\Mcal{P}^-_\ell}(\ell)) \Bigr. \\
	& \qquad \qquad \qquad \qquad \Bigl. + \frac{1}{2}\sum_{i\in \Mcal{P}^-_\ell}\V{\alpha}_i(\ell)(\V{w}_i(\ell)-\V{x}_i(\ell))^2 \Bigr)
	\end{align*}
	Clearly, for every $\ell\in \{1,\dots,n\}$, the components $\V{z}_i(\ell)$ are equal for all $i\in \Mcal{P}^-_\ell$, to some constant w.r.t. $i$, say $\bar{\V{z}}(\ell)$.
	The later quantity is given by
	\begin{align*}
	\bar{\V{z}} (\ell) &= \arg\min_{\V{w}\in \bR} \sum_{i\in P^-_\ell}\V{\alpha}_i(\ell)(\V{w}-\V{x}_i(\ell))^2 \\
	&= \frac{\sum_{i\in \Mcal{P}^-_\ell}\V{\alpha}_i(\ell)\V{x}_i(\ell)}{\sum_{i\in \Mcal{P}^-_\ell}\V{\alpha}_i(\ell)}\,.
	\end{align*}
\end{proof}

By the above lemma, the D-R splitting algorithm can be explicitly written as follows. For every $ i\in \{1,\dots,N\}$ and every $\ell\in
\{1,\dots,n\}$,
\begin{align*}
\V{x}^{(k+1)}_i &= \Prox_{\M{V}_i^{-1},f_i}(\V{u}_i^{(k)}) \\
\bar{\V{z}}^{(k+1)}(\ell) &= \frac{\sum_{i\in \Mcal{P}^-_\ell}\V{\alpha}_{i}(\ell)(2\V{x}_i^{(k+1)}(\ell)-\V{u}_i^{(k)}(\ell))}{\sum_{i\in \Mcal{P}^-_\ell}\V{\alpha}_i(\ell)} \\
\V{u}^{(k+1)}_i(\ell) &= \V{u}^{(k)}_i(\ell) + \rho^{(k)} \left( \bar{\V{z}}^{(k+1)}(\ell)-\V{x}^{(k+1)}_i(\ell) \right).
\end{align*}
The resulting algorithm is a synchronous distributed algorithm without any explicit master node. It is depicted in Algorithm~\ref{algo:proposed_algo}. Hereafter, we will refer to it by \emph{proposed} deblurring method. Given some initial guess of the local solutions at each node, the first step of Algorithm~\ref{algo:proposed_algo} execute the \textsc{Local-Solver} in parallel on all the nodes to obtain local deblurred blocks. Then all the nodes synchronize at the start of the second step, and then they exchange the overlapping pixels with its adjacent nodes to distributively perform the weighted averaging of those pixels. Then, the last step of the algorithm is executed in parallel on all the nodes.

The convergence speed of \emph{proposed} deblurring method is dependent upon the parameters $\V{\alpha}_{i}$, and like in other optimization algorithms e.g., ADMM \cite{Boyd2011}, selecting the optimal values of $\V{\alpha}_{i}$ for fast convergence is a tedious task. We select $\V{\alpha}_{i} = \gamma \; \V{\omega}_{i}$ for $\gamma > 0$, so that we can tune $\gamma$ for fast convergence, and as well impose the smooth variation of the blur among adjacent blocks.

\begin{algorithm}[t]
	\begin{algorithmic}
		\Procedure{Distributed-Solver}{} 
		\State{Initialize: $\V{u}_i\gets \V{u}^{(0)}_{i}, \forall i =1,2,\cdots,N$} 
		\While{not converged} 
		\For {$i =1\dots N$}
		\State $\V{x}_i\gets$ \textsc{Local-Solver}$(\V{u}_i\,;\V{\alpha}_i,f_i)$
		\EndFor
		\For {$\ell=1\dots n$}
		\State Compute distributively at nodes $i\in \Mcal{P}^-_\ell$:
		\State $\bar{\V{z}}_{i}(\ell)\gets \sum_{i\in P^-_{\ell}}\V{\omega}_i(\ell) (2\V{x}_i(\ell)-\V{u}_i(\ell)), \forall \ell \in  \Mcal{P}_i$
		\EndFor
		\For {$i =1\dots N$}
		\State $\V{u}_i(\ell)\gets  \V{u}_i(\ell) + \rho (\bar{\V{z}}_{i}(\ell)-\V{x}_i(\ell)), \forall \ell \in \Mcal{P}_i$
		\EndFor
		\EndWhile\label{euclidendwhile}
		\State \textbf{return} $\V{x}_1,\dots,\V{x}_N$
		\EndProcedure
		\Procedure{Local-Solver}{$\V{u}\,;\V{\alpha},f$} 
		\State $\V{w}\gets \Prox_{\V{\alpha}^{-1},f}(\V{u}) = \arg\min_{\V{w}} \{ f(\V{w}) + \frac 12 \|\V{w}-\V{u}\|_{\V{\alpha}}^2 \}$
		\State \textbf{return} $\V{w}$
		\EndProcedure
		\caption{Distributed Image Deblurring}
		\label{algo:proposed_algo}
	\end{algorithmic}
\end{algorithm}

\begin{figure*}
	\centering
	{\includegraphics[width=0.95\linewidth]{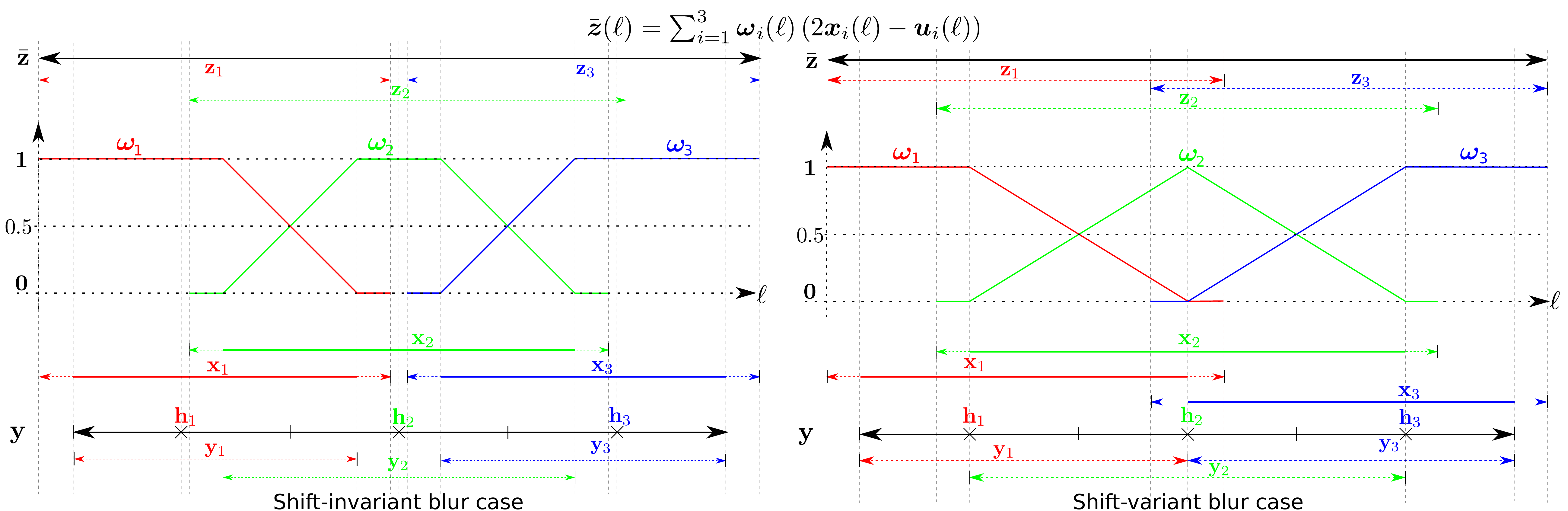}}
	\caption{An 1D illustration showing the splitting of the observed image, the extent of the overlap, and shape of the interpolation weights for the shift-invariant and smooth shift-variant blur cases. Given an observed image $\V{y}$, the crossbar are equidistant reference grid points where PSFs $\V{h}_{i}$ are sampled. Depending upon the case: shift-invariant or shift-variant blur, the extent of the overlap is selected. The observed image is split into $3$ overlapping patches $\V{y}_{i}$. $\V{x}_{i}$ are the locally deblurred patches at $i$th node from the corresponding observed patches $\V{y}_{i}$. The dashed parts at the ends of each $\V{x}_{i}$ are the extra pixels estimated at boundaries assuming no measurements were available for them. $\V{\omega}_{i}$ are the interpolation weights corresponding to the support of $\V{x}_{i}$, and its values are within range $[0, 1]$ such that $\sum_{i=1}^{3} \V{\omega}_{i}(\ell) = 1, \forall \ell=1,\cdots,n$.
	}
	\label{fig:demonstration1}
\end{figure*}

\begin{figure}
	\centering
	\includegraphics[width=0.75\linewidth]{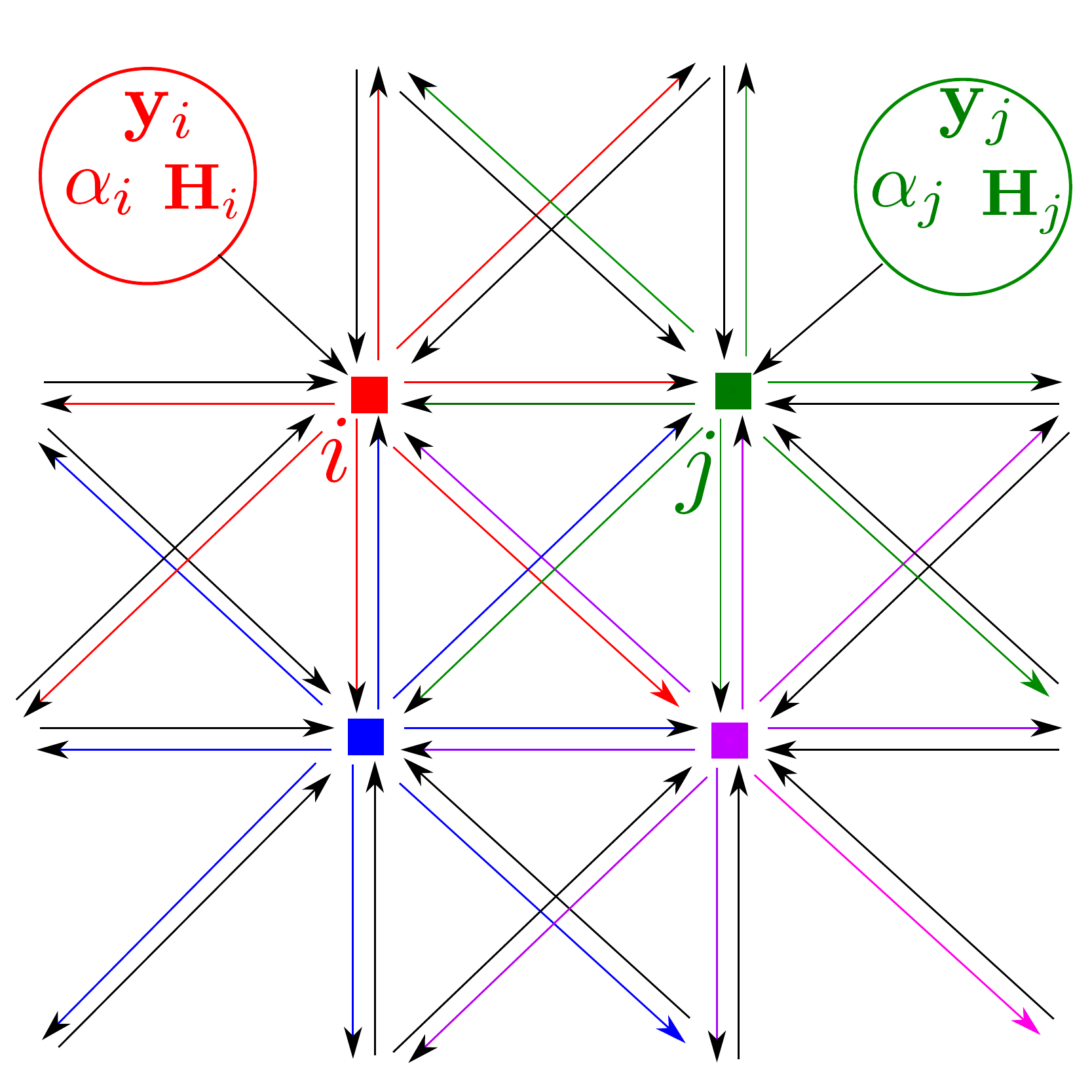}
	\caption{The communication network topology for our distributed image deblurring algorithm. Each square represents a node connected to its neighbors by bi-directional communication channel. Provided with blurred patches, and the corresponding local PSFs and the interpolation weights, the nodes run simultaneously the \textsc{Local-Solver}s, and exchange their local estimate of overlapping pixels to perform the consensus operation.}
	\label{fig:demonstration2}
\end{figure}

\subsection{Size of Image Blocks and the Extent of the Overlaps}
\label{subsec:patch_size_overlap_weights}
The size of observed image blocks and the extent of overlaps among them have impact upon the computational cost and the quality of final deblurred image. Thus, they must be chosen appropriately depending upon the situations: shift-invariant, piece-wise constant shift-variant, and smooth shift-variant blur. Without loss of generality, we assume that all local PSFs have the same size.

In the case of smooth shift-variant blur, the size of image blocks and the extent of overlaps depend upon how densely the grid points, thus the local PSFs, can be sampled within the field-of-view. Denser the grid points, better is the approximation of shift-variant blur in the image. If we consider $N$ equidistant grid points within the field-of-view, then the support of image blocks extend (in both horizontal and vertical directions) over three consecutive grid points, except for the image blocks at the boundaries of image. Thus, each block overlaps half of its size from top, bottom, left and right with its adjacent blocks, i.e., a pixel in the observed image appears into four adjacent blocks. As discussed previously, this extent of overlaps are necessary to impose smooth variation of blur among the image blocks. See the illustration in Fig.~\ref{fig:demonstration1}, which demonstrate for 1D signal the scheme of splitting the observed image into blocks, and the extent of overlaps among them. Similarly, Fig.~\ref{fig:shiftvariant_simulation_2}(a) shows $5\times 5$ grid points overlaid upon the observed image where the local PSFs are sampled, and Fig.~\ref{fig:shiftvariant_simulation_2}(c) shows resulting overlapping image blocks.

In the case of shift-invariant blur, again we can consider $N$ equidistant grid points within the field-of-view, and split the observed image into $N$ overlapping blocks. However, in this case, we do not need the same extent of overlaps among the blocks as in the case of smooth shift-variant blur, but a smaller extent of overlaps. In fact, it is sufficient to have extent of overlaps slightly larger than half the size of the local PSF. In this case, overlaps among the blocks are required not to impose smooth variation of blur (as the PSF same over whole field-of-view), but to minimize any discrepancies in the deblurred image due to approximation introduced in the regularization term. Similarly, in the case of piece-wise constant shift-variant blur, we can consider the same splitting scheme as in the case of shift-invariant blur for the same reason.

In the case of \emph{proposed} deblurring method, the maximum number blocks we can split the image is dependent upon the number of available nodes to process in parallel. In such a situation, the larger the number of available nodes, the larger the number of blocks we can chose, thus, smaller the size of blocks. From a computational point of view, the smaller the size of the blocks, the smaller the memory and computation time required by nodes to execute the \textsc{Local-Solver}\footnote{The size of vectors affects the efficiency of Fast Fourier Transform algorithm and other operations. Smaller vector fit well into the different levels of data-caches in processor so that it consumes lesser clock cycles to execute the operations compared to the situation when the vectors are larger to fit into the data-caches leading to many cache misses and eventually requiring several cycles to fetch them from main memory.}. Moreover, the smaller the size of blocks, the smaller is the extent of overlaps among them, thus, the smaller the amount of data to be exchanged among the nodes. With all these advantages from a computational point-of-view, we may intend to split the observed image into as many blocks as the number of available nodes, given that we are provided with as many local PSFs within the field of view. However, the size of the local PSF suggests a lower limit on the size of blocks. We should not select the size of blocks smaller than the size of the PSF, otherwise the local deblurring problem at each node will be highly underdetermined (more number of unknown pixels at boundaries of the blocks need to be estimated), which would eventually deteriorate the quality of the final deblurred image. Thus, to achieve better quality deblurred image, we should select the size of blocks at least twice the size of PSF.

\section{Numerical Experiments and Results}
\label{sec:numerical_experiments_results}
Our algorithm depicted in Algorithm~\ref{algo:proposed_algo} can be seen as general framework for distributed image deblurring. Depending upon the situation, we can derive a particular instance from it, e.g., shift-invariant or shift-variant deblurring, and depending upon the structures of data-fidelity and regularizer, we can select any efficient optimization method as the \textsc{Local-Solver}. In order to validate the \emph{proposed} deblurring method, we performed two numerical experiments, first, considering the simpler case of shift-invariant deblurring, and then more difficult case of the shift-variant deblurring. 

Moreover, to evaluate the performance of the \emph{proposed} deblurring method in terms of quality of deblurred image, we compared it with two other possible approaches. First obvious choice was \emph{centralized} deblurring as a reference method that solves the original problem (\ref{Eq:specific_img_deblurring_problem}). Since \emph{centralized} deblurring method would require a huge memory for a large image, thus we selected images of reasonable sizes for our experiments. We chose the na\"{\i}ve ``split, deblur independently, and merge'' approach as the second method. In oder to minimize incoherencies among the deblurred blocks obtained by the second method, we split the observed image into overlapping blocks, and blended the locally deblurred blocks into single image by weighted averaging of the overlapping regions by using the same 2D first-order interpolation weights as in \emph{proposed} method. Hereafter, we will refer to this approach as \emph{independent} deblurring method. The \emph{independent} deblurring method is not intended to solve correctly either of the optimization problems (\ref{Eq:specific_img_deblurring_problem}) or (\ref{Eq:genericpb}), but it is a straight forward way to deblur large images with minimal computational resources. Similarly, in the case of shift-variant image deblurring, we consider comparing our algorithm with the \emph{centralized} and \emph{independent} deblurring methods. For the \emph{centralized} deblurring method, we used the fast shift-variant blur operator based on first-order PSF interpolation as described in \cite{Denis2015}.

Let us first present the general settings of our experiments, and later more specific details. For the case of shift-invariant deblurring, we considered ``Lena'' image, which we resized to $1024\times 1024$ pixels, and extended its dynamic range linearly to have maximum intensity up to 6000 photons/pixels. We refer to this image as the reference image. The reference image was blurred with an Airy disk PSF (of size $201\times201$ pixels) formed due to a circular aperture of radius 6 pixels, and was corrupted with white Gaussian noise of variance $\sigma^2=400$ photons/pixels to obtain the observed image. To study the impact of factors discussed in Section \ref{subsec:patch_size_overlap_weights}, we considered different cases by varying number of blocks and the extent of overlaps among them. Figure~\ref{fig:shiftinvariant_simulation} shows an instance when observed image is split into $3 \times 3$ blocks with overlaps of $100 \times 100$ pixels among the adjacent blocks.

For the shift-variant deblurring case, we considered ``Barbara'' image, which was resized it to $1151 \times 1407$ pixels, and its dynamic range was extended linearly to have maximum intensity up to 6000 photons/pixels. As above, we call this image as reference image. We generated $9 \times 9$ normalized Gaussian PSFs each of size $201 \times 201$ pixels with the central PSF having full-width-half-maximum (FWHM) $= 3.5 \times 3.5$ pixels, and linearly increasing FWHM in the radial direction up to $16.5 \times 10.5$ pixels for the PSF at extreme corner of the reference image as depicted in Fig.~\ref{fig:shiftvariant_simulation_1}. These PSFs, somehow, mimic the shift-variant blur due to optical aberrations (coma) \cite{Mahajan2011}. We blurred the reference image with this shift-variant PSFs using shift-variant blur operator based on PSF interpolation described in \cite{Denis2015}. We obtained the final observed image by adding white Gaussian noise of variance $\sigma^2=400$ photons/pixels to the blurred image. As in shift-invariant deblurring case, we considered different scenarios by varying the number of blocks. Figure~\ref{fig:shiftvariant_simulation_2} shows an experimental setup when using only $5 \times 5$ gird of PSFs, i.e., when observed image split is into $5 \times 5$ overlapping blocks.

In both the cases, we deliberately selected low level noise in the observed image so that any incoherency or artifact arising in the deblurred image due to the approximations we made was not superseded by the strong regularization level required for low signal-to-noise ratio in observed image. Also, we selected sufficiently large size of PSFs so that they are not band-limited, which is the case in many real imaging systems.

\subsection{Choice of Regularization Term}
For our experiments, we selected the regularization function $\phi$ to be Huber loss and $\M{D}$ to be circular forward finite difference operator, so the regularizer is written as
\[
\phi(\M{D}_{i} \V{x}_{i}) = 
\begin{cases}
\frac{1}{2} \| \M{D}_{i} \V{x}_{i} \|^{2}_{2} 				& \quad \| \M{D}_{i} \V{x}_{i}  \|_{2} \leq \delta \\
\delta (\| \M{D}_{i} \V{x}_{i} \|_{2} - \frac{\delta}{2})	& \text{otherwise} \\
\end{cases}
\]
We chose this regularizer for two reasons: i) it is smooth (differentiable), which makes the functions $f_{i}$ in (\ref{Eq:genericpb}) smooth so that we can choose any fast optimization algorithm such as quasi-Newton methods (e.g., BFGS class methods \cite{Nocedal2006}) as the \textsc{Local-Solver}, and ii) it behaves in between Tikhonov and the total-variation regularization, depending upon the value of $\delta$. Thus, it is able to preserve the sharp structures in the images while avoiding stair-case artifacts in smoothly varying regions usually rendered by total-variation regularizer \cite{mugnier2004mistral}. However, our \emph{proposed} deblurring method is not restricted to any particular choice of regularizer; depending upon the application one can chose any regularizer and any fast optimization algorithm for local deblurring problem.

\subsection{Implementation Details} 
Since the optimization problems arising in all deblurring methods were smooth, thus we chose a quasi-Newton method called limited-memory variable metric with bound-constraint (VMLM-B)\footnote{An open source implementation of VMLM-B in C programming is available at  \url{https://github.com/emmt/OptimPack}.} \cite{Thiebaut2002}. VMLM-B is a variant of the standard limited-memory BFGS with bound-constraint (LBFGS-B) \cite{Zhu1995}, but with some subtle differences. VMLM-B does not require any manual tunning of parameters to achieve fast convergence (the only parameter step-length at each iteration of VMLM-B is estimated by line-search methods satisfying Wolfe's conditions). Thus, this left us with only a single parameter $\gamma$ to be tunned to achieve faster convergence of our algorithm. Moreover, using the same algorithm (VMLM-B) for solving the optimization problems in all three deblurring methods ensured a fair comparison among them in terms of quality of deblurred images and the computational expenses.

In all our experiments, we set $\rho^{(k)} = 1$. After few trials, we found that $\gamma = 0.001$ results in fast convergence of our algorithm. Since D-R splitting algorithm converges even when the $\Prox$ operations are carried out inexactly, 
thus to speed-up our algorithm, we allowed the \textsc{Local-Solver} to be less accurate during the initial iterations, and then more accurate as the iterations progress \cite[Section 3.4.4]{Boyd2011}. To do so, we allowed VMLM-B to perform 10 inner iterations at the beginning, and increased it by 10 at every next iteration of main loop. To speed-up further, we did warm-start of the VMLM-B at every iterations of our algorithm by supplying $\V{x}^{(k)}$ as an initial guess for next $\V{x}^{(k+1)}$ estimation. All the results presented below were obtained after 25 iterations of the proposed algorithm. It was observed that 25 iterations were generally sufficient, and more than 25 iterations did not bring any noticeable difference in the results. For the \emph{centralized} and \emph{independent} deblurring methods, we allowed VMLM-B to perform a maximum of 1000 iterations or until it satisfied its own stopping criterion based on gradient convergence or progress in the line-search method. It was noticed that, generally, 500 to 600 iterations were sufficient and further iterations did not bring any noticeable difference.

All the three deblurring methods including the \emph{centralized}, \emph{independent} and the \emph{proposed} were implemented in high-level dynamic programming language ``Julia''\footnote{\url{http://julialang.org/}} \cite{bezanson2012julia}. The \emph{proposed} distributed deblurring is implemented using Message Passing Interface (MPI) based on Open MPI library\footnote{\url{https://www.open-mpi.org/}}. The source codes for all the demonstrations shown in this paper will be freely available at \url{https://github.com/mouryarahul/ImageReconstruction}.

\subsection{Results and Discussion}
\label{subsec:results}
As mentioned above, we conducted two different experiments, one for shift-invariant, and another for smooth shift-variant blurred image. To compare the quality of the deblurred images obtained from all the three methods we considered two image quality metrics: signal-to-noise ratio (SNR), and Structural Similarity Index (SSIM) \cite{Wang2004}. In all the cases, we heuristically fixed the regularization parameter $\delta = 100$, and performed deblurring for different values of $\lambda$ in a sufficiently large range to see if one of the methods produces better quality image for certain range of $\lambda$ than others. The plots (SNR vs $\lambda$ and SSIM vs $\lambda$) in the  Fig.~\ref{fig:shiftinvariant_simulation}(i\textendash j) show the results obtained from shift-invariant deblurring when observed image was split into $3 \times 3$ blocks. To study the influence of size of blocks and extent of overlaps on the quality of deblurred image, we conducted several trails with different settings. The plots in the Fig.~\ref{fig:shiftinvariant_simulation}(k\textendash l) show the influence of extent of overlaps on the image quality. The plots shows that our algorithm performed slightly better than the \emph{centralized} deblurring in terms of both the SNR and SSIM. This could be due to the approximation introduced in the regularization term in the \emph{proposed} deblurring method, e.g., sum of the regularizer on the blocks may be favorable than the regularizer on whole for some images depending upon the contents in the image. Moreover, the \emph{proposed} deblurring may also be benefited by the explicit overlaps among the adjacent blocks; each node can result into slightly different values of the overlapping pixels, which eventually leads to better estimate of those pixels after averaging the estimates from different nodes. This is indicated by the fact that there is slight increase in both the SNR and SSIM with increase in the extent of overlaps as seen in the plots Fig.~\ref{fig:shiftinvariant_simulation}(k\textendash l). It also indicates that an overlap equal to half the size of PSF is sufficient, and a larger overlap did not produce much improvement, but, of course, it did increase the computational and communication cost. We also noticed that the SNR and SSIM of deblurred image obtained from \emph{proposed} deblurring is slightly less dependent upon the extent of overlaps compared to that obtained by \emph{independent} deblurring.

The plots in Fig.~\ref{fig:shiftvariant_SNRvsLambda}(a\textendash b) compare the results obtained by the three deblurring methods for the case of smooth
shift-variant blurred image. First, we noticed that the image quality obtained by all the three methods improves drastically with the increase in density of grid of PSFs sampled within field-of-view. The SNR and SSIM of deblurred image obtained using only $3 \times 3$ PSFs is significantly lower than the cases when $5 \times 5$, $6 \times 6$ and $8 \times 8$ grids of PSFs are used. This is due to the fact that the observed image was simulated using a finer $9 \times 9$ grids of PSFs, and the coarser grids of PSFs are less accurate in capturing smooth variation of blur than a finer grid of PSFs. We also noticed that unlike the shift-invariant deblurring case, the values of SNR and SSIM obtained by the \emph{proposed} deblurring is slightly lower than that obtained by the \emph{centralize} deblurring. This could be due to the fact that both the \emph{centralized} and \emph{proposed} deblurring benefits from the explicit overlaps among the block, but some information is always lost at the boundaries of the deblurred blocks in the case of \emph{proposed} deblurring, which is not the case for \emph{centralized} deblurring. Also, there must be some influence of the approximation introduced into the regularization term for the case of \emph{proposed} deblurring. We observed from the plots in Fig.~\ref{fig:shiftinvariant_simulation}(i\textendash j) and Fig.~\ref{fig:shiftvariant_SNRvsLambda}(a\textendash b) that three methods do not attained the highest SNR or SSIM at the same value of the regularization parameter $\lambda$.

As expected, we observed that the na\"{\i}ve \emph{independent} deblurring method performed significantly lower than the other two methods. As pointed out above, this is due to the fact that the method is not intended to solve correctly either the original problem (\ref{Eq:specific_img_deblurring_problem}) or the distributed formulation (\ref{Eq:genericpb}), but it is the crudest and computationally cheapest way to perform image deblurring by splitting image into pieces. 

In our experiments, we observed that the first step, i.e., the \textsc{Local-Solver}, of the \emph{proposed} deblurring algorithm is the one which consumed the significant computation time among the three steps; \textsc{Local-Solver} took 600 to 800 times more computation time than the consensus step (including communication time among the nodes). This should be true, in general, for many other local deblurring algorithms devised using different combination of data-fidelity and regularization terms depending upon the applications. Thus, the \emph{proposed} deblurring algorithm is efficient in the sense that it is computation intensive rather than being communication intensive. 

For the small or moderate size of images as we considered in our experiments, the \emph{proposed} deblurring algorithm is computationally more expensive than the \emph{centralized} and \emph{independent} deblurring methods; it takes at least 10 to 15 times more computation time than the latter ones. However, in the case of extremely large images for which \emph{centralized} deblurring is practically not feasible, computational expenses of \emph{proposed} deblurring is justified by the better quality of deblurred image, which cannot be achieved by the computationally cheaper \emph{independent} deblurring. Some more simulations performed with different set of images and PSFs are presented in Appendix \ref{sec:appendix}, and the results suggest similar conclusions as above.


\begin{figure*}
	\centering
	\subfloat[{Reference image (size = $1024 \times 1024$ pixels)}]{\includegraphics[width=0.24\linewidth]{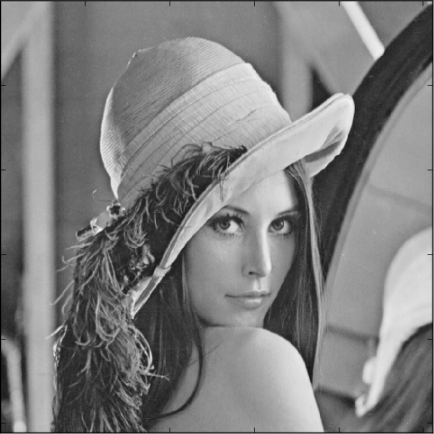}} \;
	\subfloat[Airy disk PSFs (size = $201 \times 201$ pixels) due to circular aperture of radius $6$ pixels]{\includegraphics[width=0.24\linewidth]{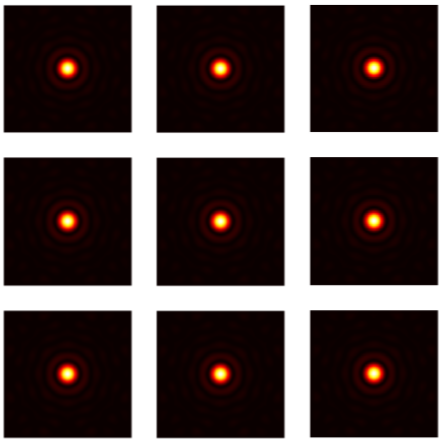}} \;
	\subfloat[{Observed image (SNR= 9.0364 dB, SSIM = 0.7594)}]{\includegraphics[width=0.24\linewidth]{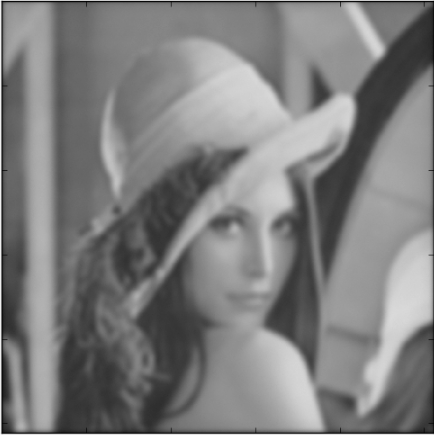}} \;
	\subfloat[{Overlapping observed patches}]{\includegraphics[width=0.24\linewidth]{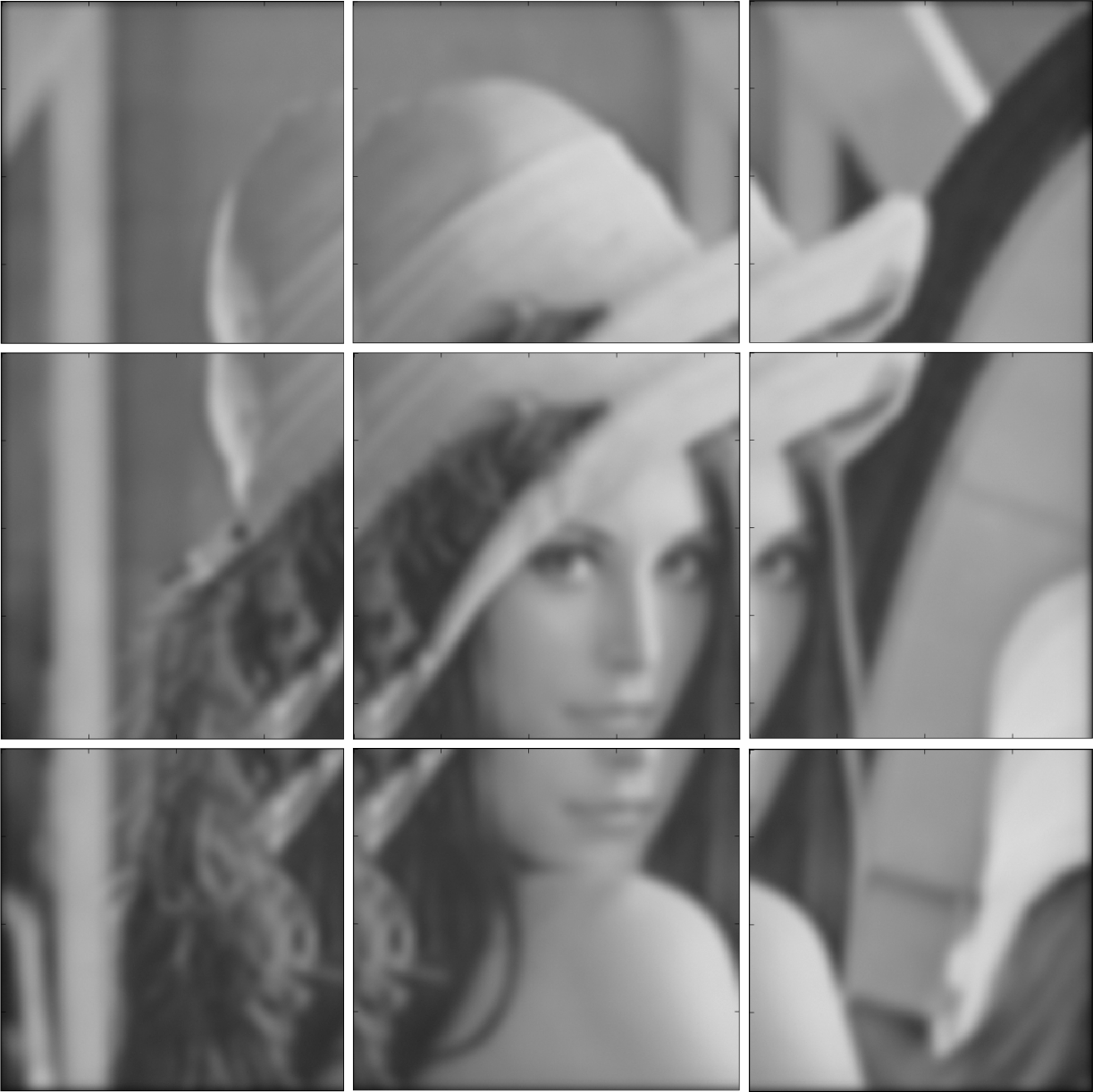}} \\
	\subfloat[{$3 \times 3$ blocks of interpolation weights with brightest pixel equal to 1 and the darkest pixel equal to 0.}]{\includegraphics[width=0.24\linewidth]{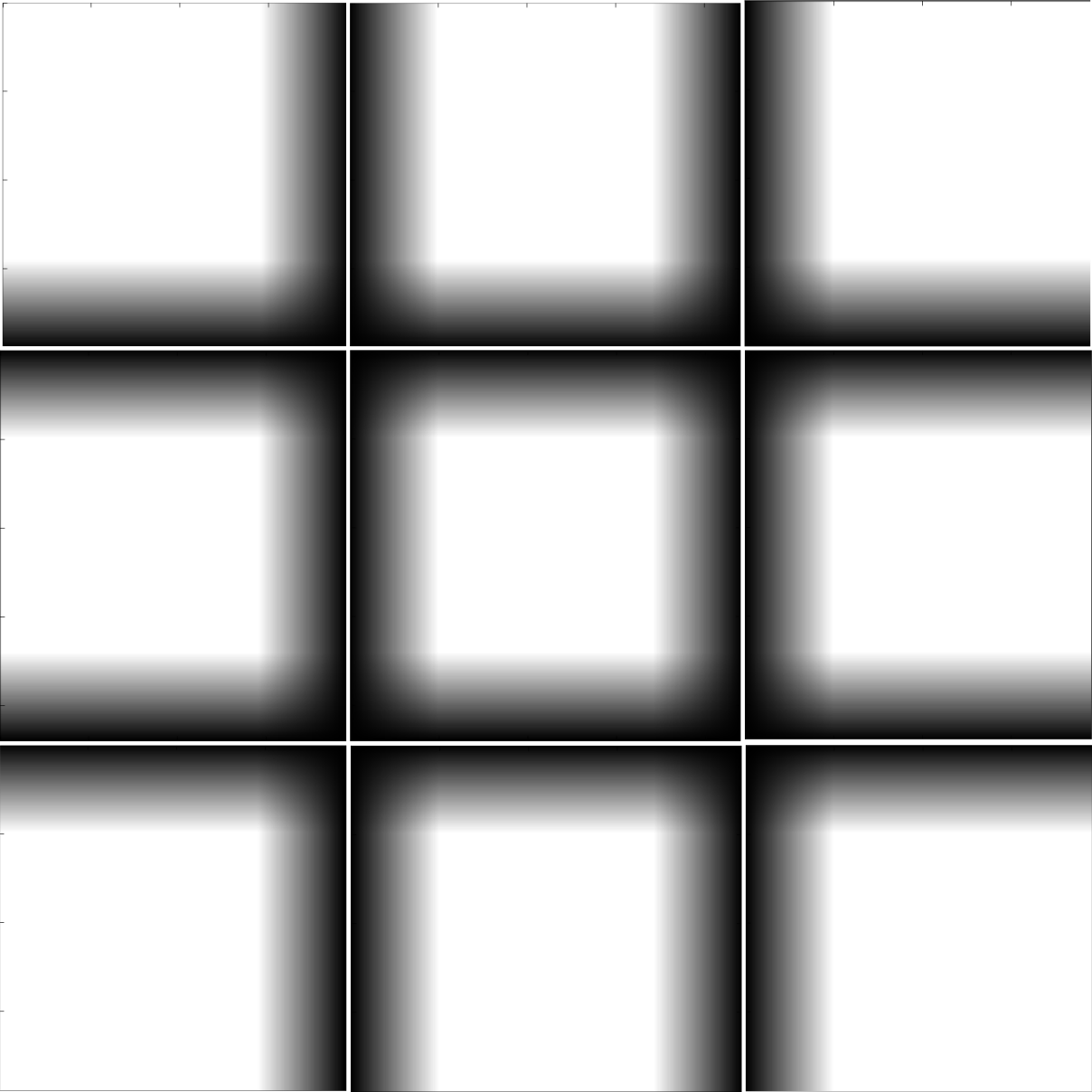}} \;
	\subfloat[{ Image obtained by \emph{centralized} deblurring method (SNR = 14.3818 dB, SSIM = 0.8166 at $\lambda = 0.001$)}] {\includegraphics[width=0.24\linewidth]{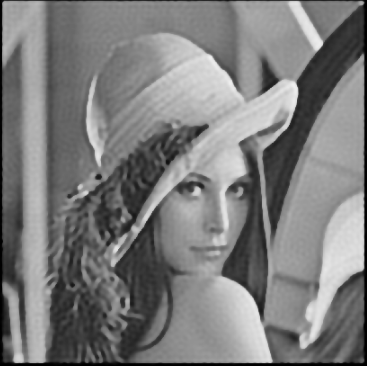}} \;
	\subfloat[{Image obtained by \emph{independent} deblurring method (SNR = 14.3402 dB, SSIM = 0.8162 at $\lambda = 0.001$)}] {\includegraphics[width=0.24\linewidth]{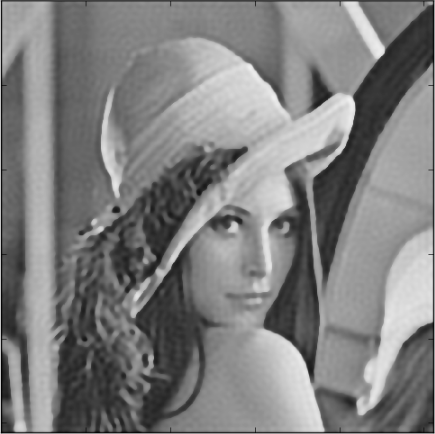}} \;
	\subfloat[{Image obtained by \emph{proposed} deblurring method (SNR = \textbf{14.5931} dB, SSIM = \textbf{0.8188} at $\lambda = 0.002$)}] {\includegraphics[width=0.24\linewidth]{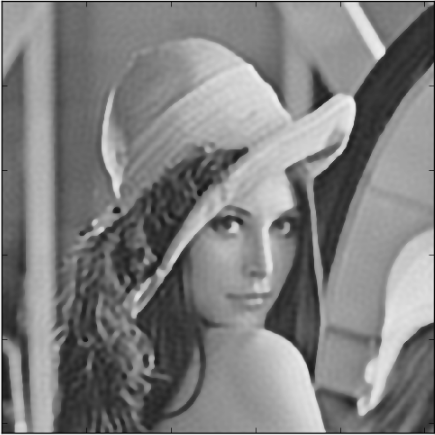}} \\
	\subfloat[{SNR vs $\lambda$}] {\includegraphics[trim = 0mm 0mm 20mm 16mm, clip,width=0.32\linewidth]{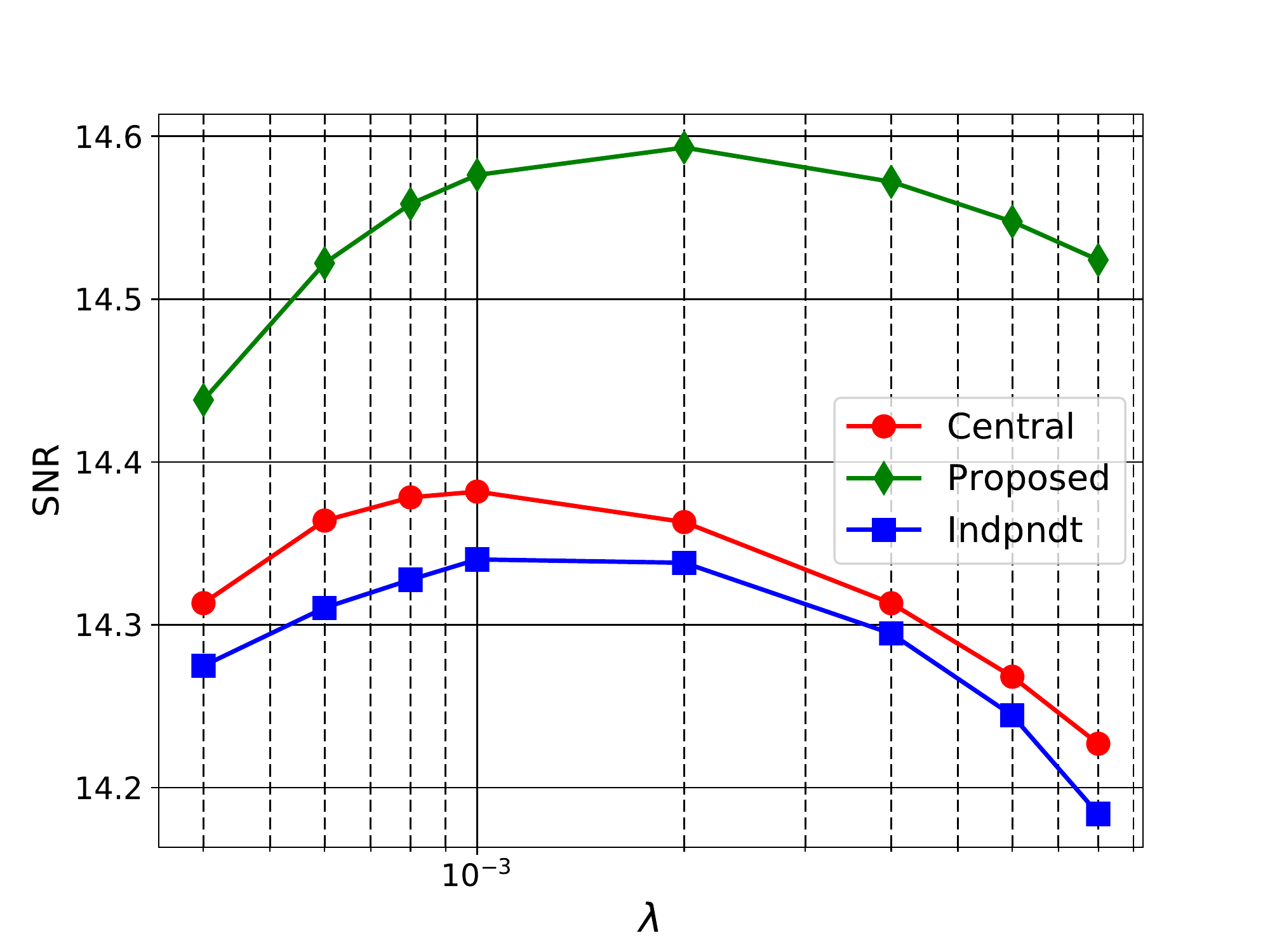}} \;
	\subfloat[{SSIM vs $\lambda$}] {\includegraphics[trim = 0mm 0mm 20mm 16mm, clip,width=0.32\linewidth]{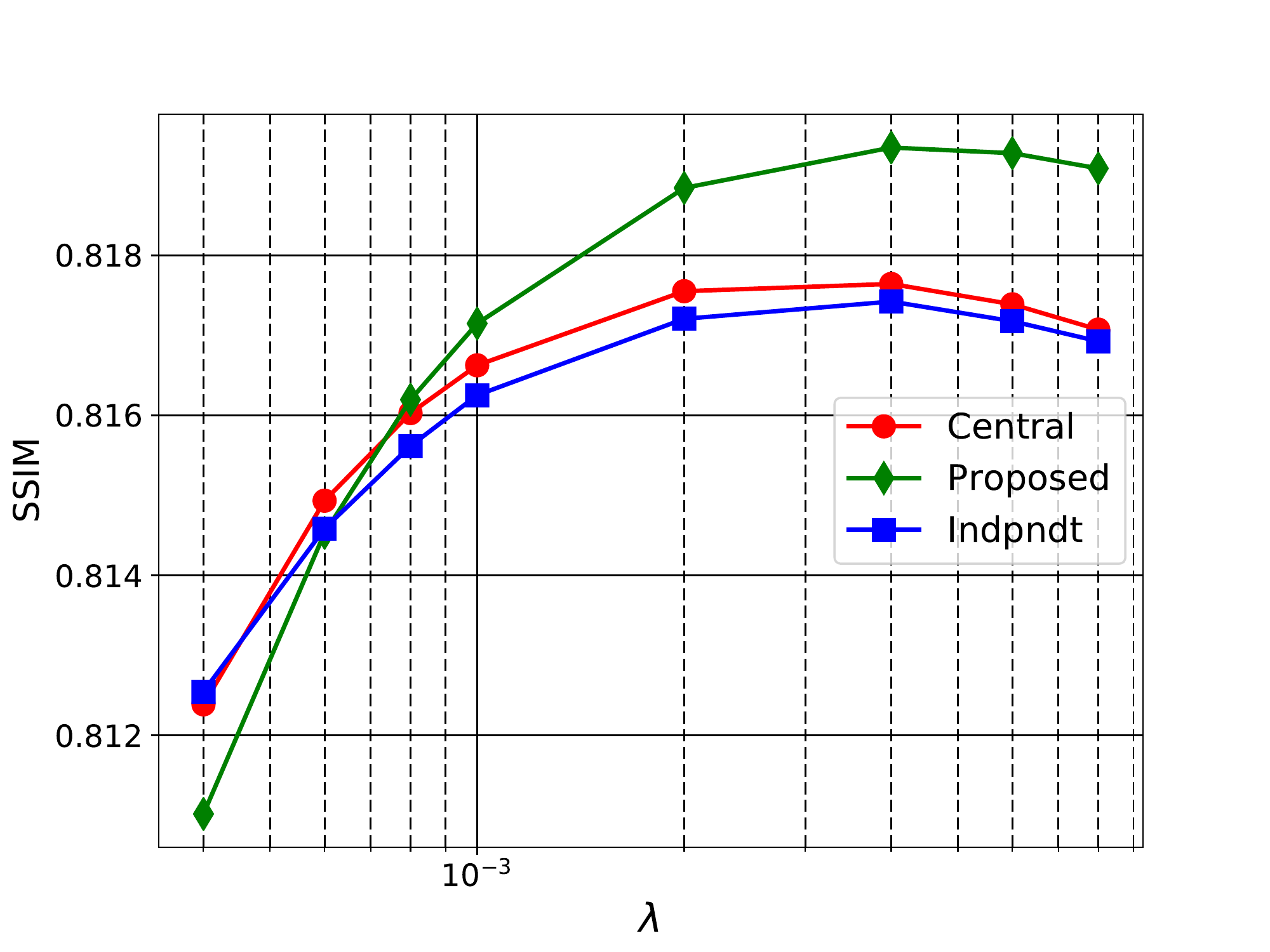}}\\
	\subfloat[{SNR vs Overlap}] {\includegraphics[trim = 0mm 0mm 20mm 16mm, clip,width=0.32\linewidth]{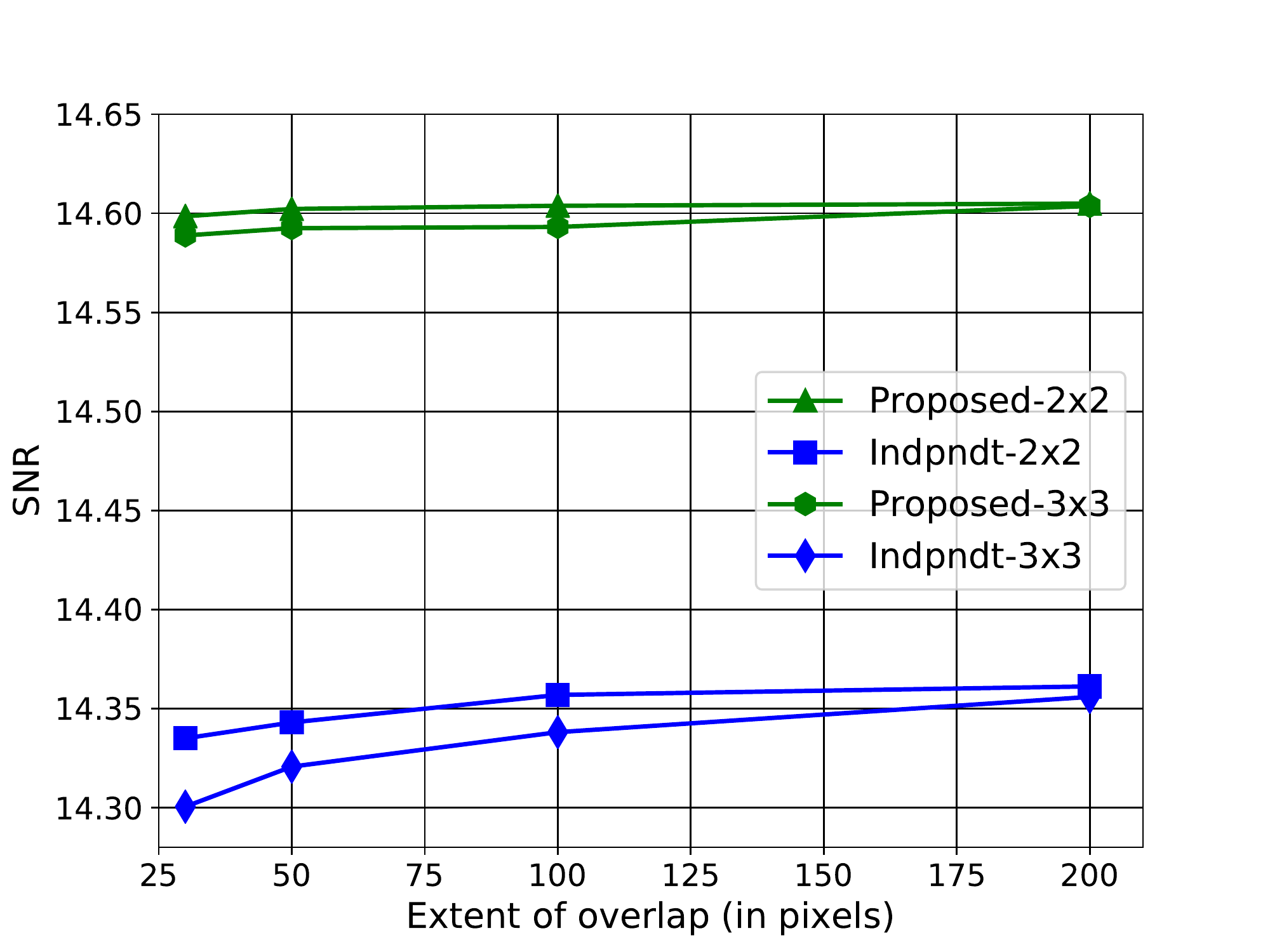}} \;
	\subfloat[{SSIM vs Overlap}] {\includegraphics[trim = 0mm 0mm 20mm 16mm, clip,width=0.32\linewidth]{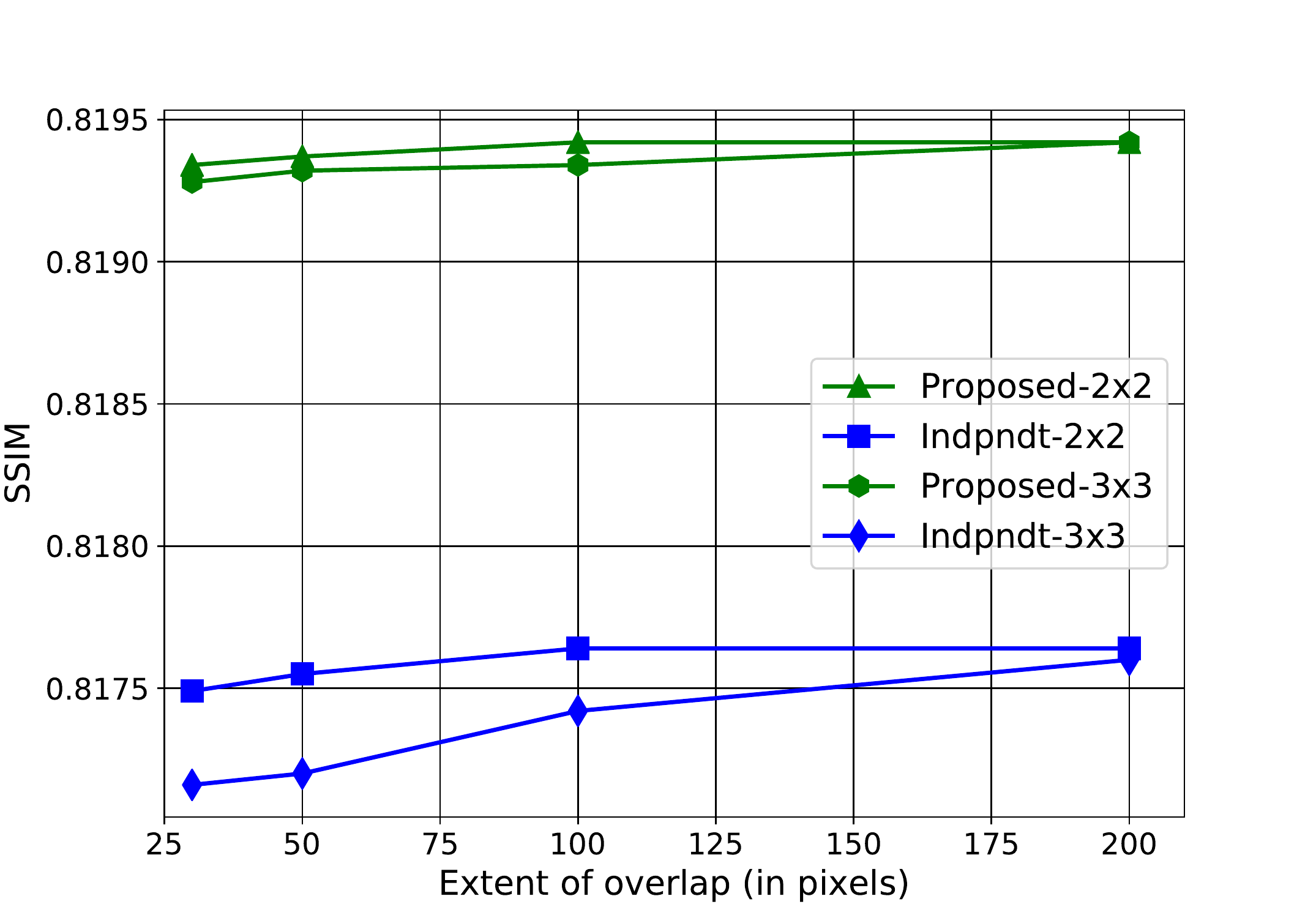}}
	\caption{Experiment 1: experimental setup and results from shift-invariant deblurring of ``Lena'' image. The observed image (c) is obtained by blurring the reference image (a) with shift-invariant PSF (b), and then corrupting with white Gaussian noise of variance $\sigma^2 = 400$. Image blocks (d) are obtained by splitting observed image into $3 \times 3$ blocks with overlaps of $100 \times 100$ pixels among them. The 2D first-order interpolation weights (e) are of the same size as observed blocks. Plots (i\textendash j) show the image quality of deblurred images obtained for different strengths of regularization. The legends ``\textcolor{red}{Central}'', ``\textcolor{green}{Proposed}'', and ``\textcolor{blue}{Indpndt}'' represent results from the \emph{centralized}, the \emph{proposed} and the \emph{independent} deblurring methods, respectively. Plots (k\textendash l) show impact of extent of overlap on the image quality of deblurred images. The legends ``\textcolor{green}{Proposed-2x2}'' and ``\textcolor{blue}{Indpndt-2x2}'' represent the \emph{proposed} and \emph{independent} deblurring methods for the case when the image is split into $2\times 2$ blocks. Similarly, the other two legends represent the case when the image is split into $3 \times 3$ blocks.}
	\label{fig:shiftinvariant_simulation}
\end{figure*}

\begin{figure}
	\centering
	\subfloat[{Reference image ($1151 \times 1407$ pixels) with $9 \times 9$ grid points (overlaid in green) where PSFs are sampled.}]{\includegraphics[trim = 0mm 0mm 0mm 0mm, clip,width=0.85\linewidth]{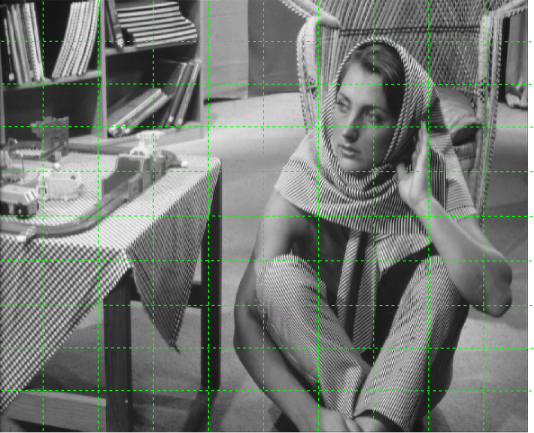}} \\
	\subfloat[Shift-variant PSFs (each of size $201 \times 201$ pixels) generated at the $9 \times 9$ grid points in (a).]{\includegraphics[trim = 0mm 0mm 0mm 0mm, clip,width=0.85\linewidth]{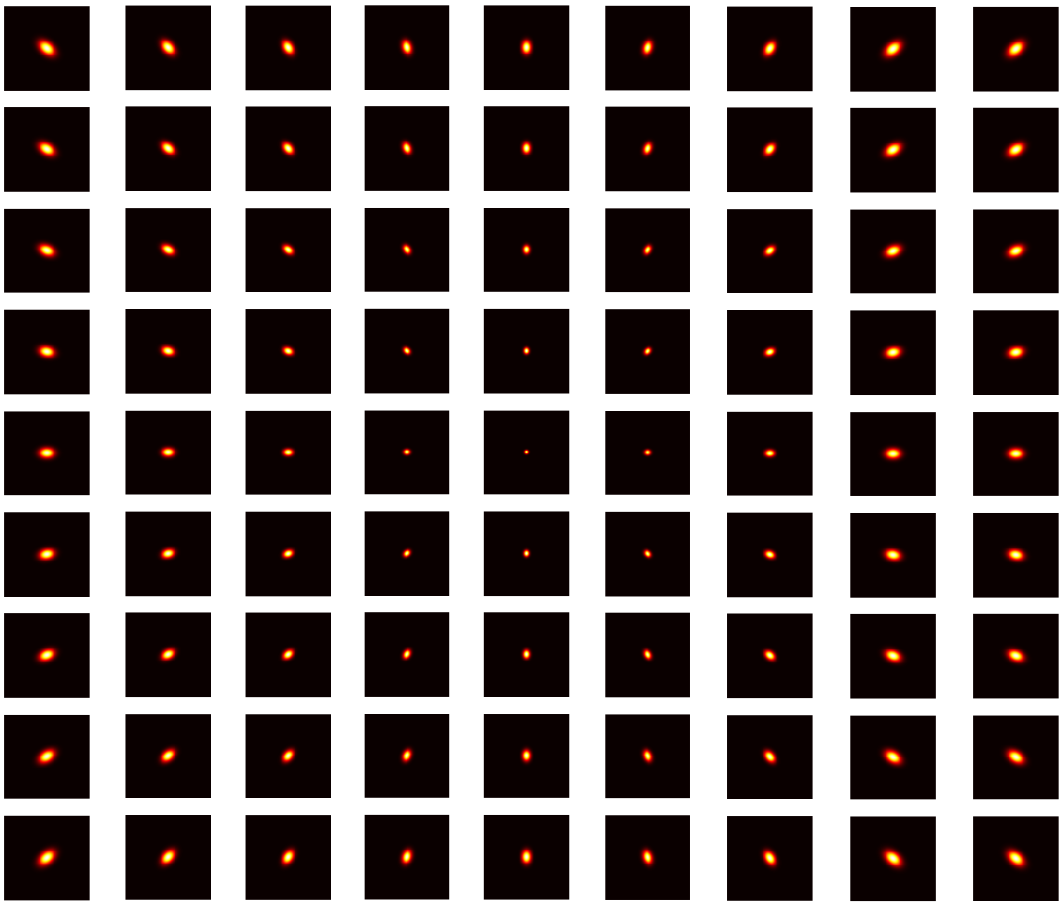}}\\	
	\subfloat[{Blurred and Noisy (observed) image}]{\includegraphics[trim = 30mm 15mm 25mm 15mm, clip,width=0.85\linewidth]{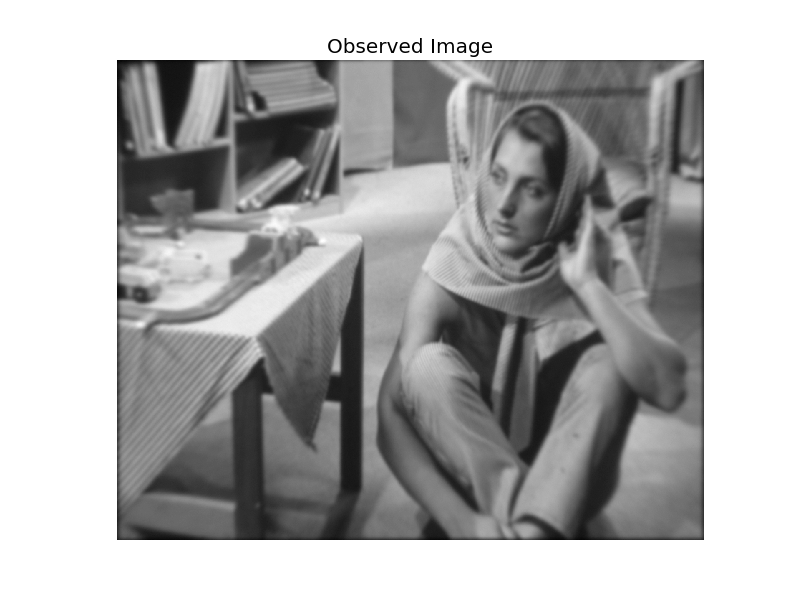}}
	\caption{Experiment 2: experimental setup for shift-variant deblurring of ``Barbara'' image. The grid of PSFs (b) contains normalized Gaussian PSFs with central PSF having FWHM = $3.5 \times 3.5$ pixels, and linearly increased up to FWHM = $16.5 \times 10.5$ pixels for the extreme corner PSF. The observed image (c) is obtained by blurring the reference image (a) with the shift-variant PSFs, and then corrupting white Gaussian noise of variance $\sigma^2 = 400$ photons/pixels.}
	\label{fig:shiftvariant_simulation_1}
\end{figure}

\begin{figure}
	\centering
	\subfloat[$5 \times 5$ grid points overlaid upon the observed image Fig. \ref{fig:shiftvariant_simulation_1}(c) where the PSFs are sampled.] {\includegraphics[trim = 0mm 0mm 0mm 0mm, clip,width=0.85\linewidth]{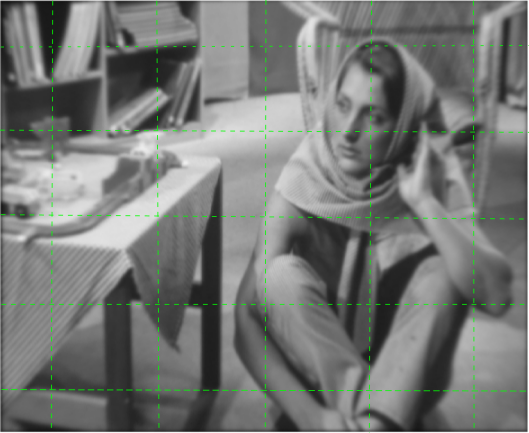}} \\
	\subfloat[Shift-variant PSFs (each of size $201 \times 201$ pixels) sampled at $5\times 5$ grid points shown in (a).] {\includegraphics[trim = 0mm 0mm 0mm 0mm, clip,width=0.85\linewidth]{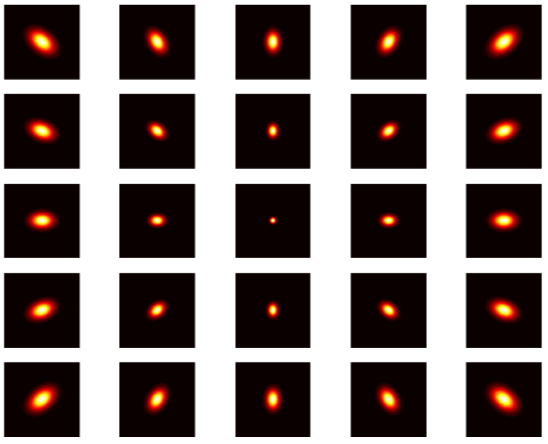}} \\
	\subfloat[$5 \times 5$ overlapping observed blocks obtained after splitting image in (a).] {\includegraphics[trim = 0mm 0mm 0mm 0mm, clip,width=0.85\linewidth]{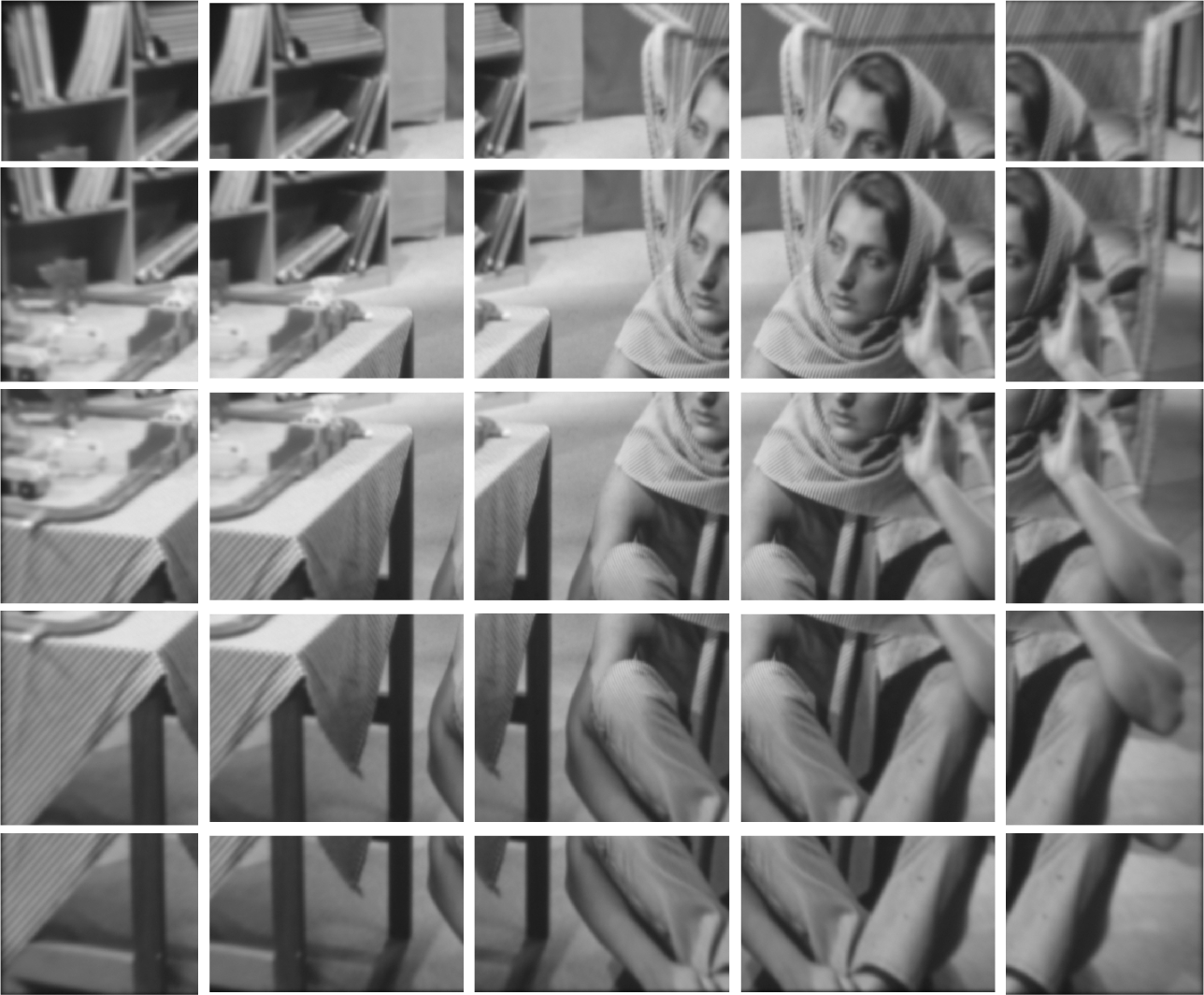}}
	\caption{Experiment 2: experimental setup for shift-variant deblurring when using $5 \times 5$ grid of PSFs, i.e., observed image is split into $5\times 5$ overlapping blocks.}
	\label{fig:shiftvariant_simulation_2}
\end{figure}

\begin{figure}
	\centering
	\subfloat[SNR vs $\lambda$]{\includegraphics[trim = 0mm 0mm 0mm 0mm, clip,width=0.95\linewidth]{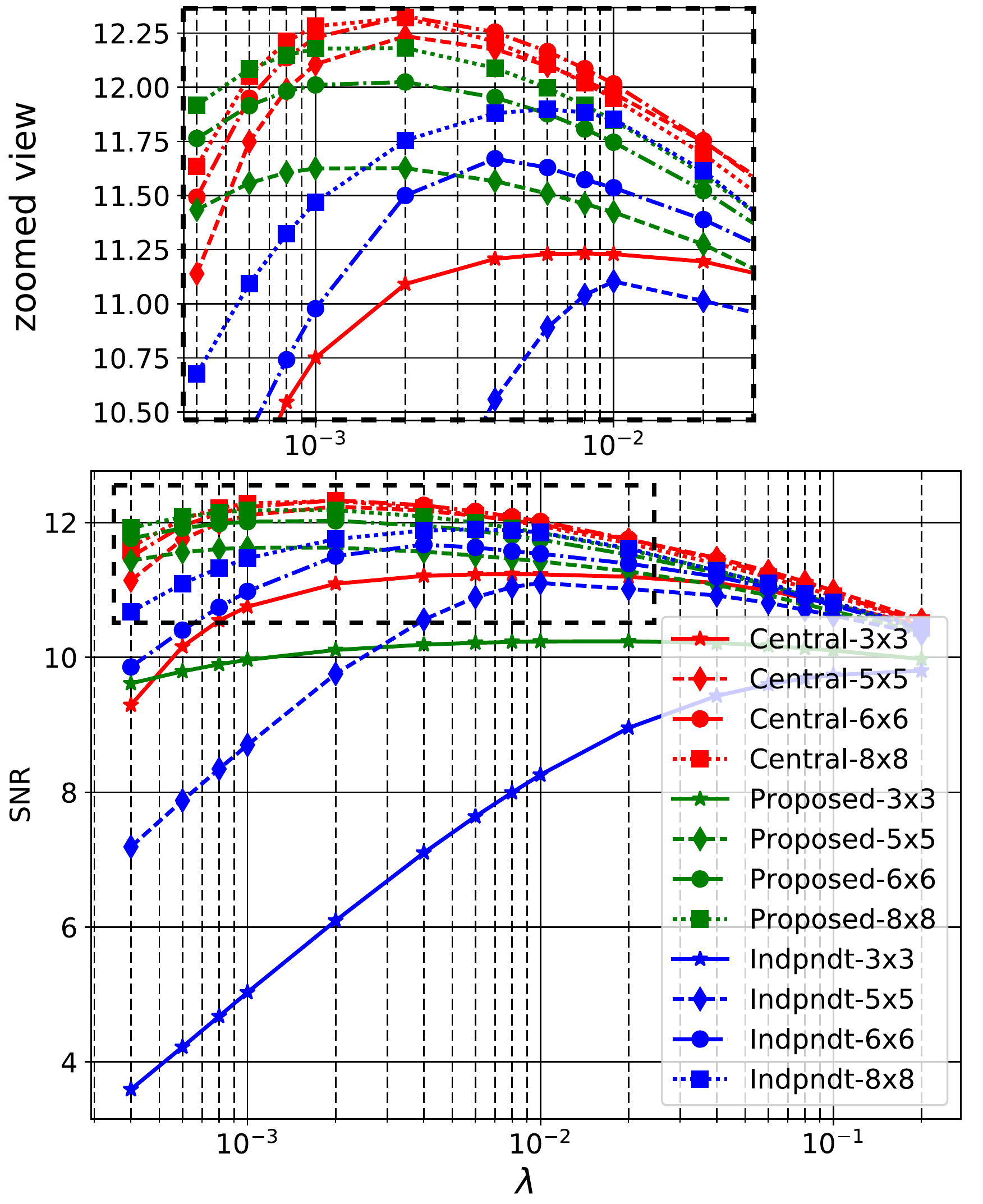}} \\
	\subfloat[SSIM vs $\lambda$]{\includegraphics[trim = 0mm 0mm 0mm 0mm, clip,width=0.95\linewidth]{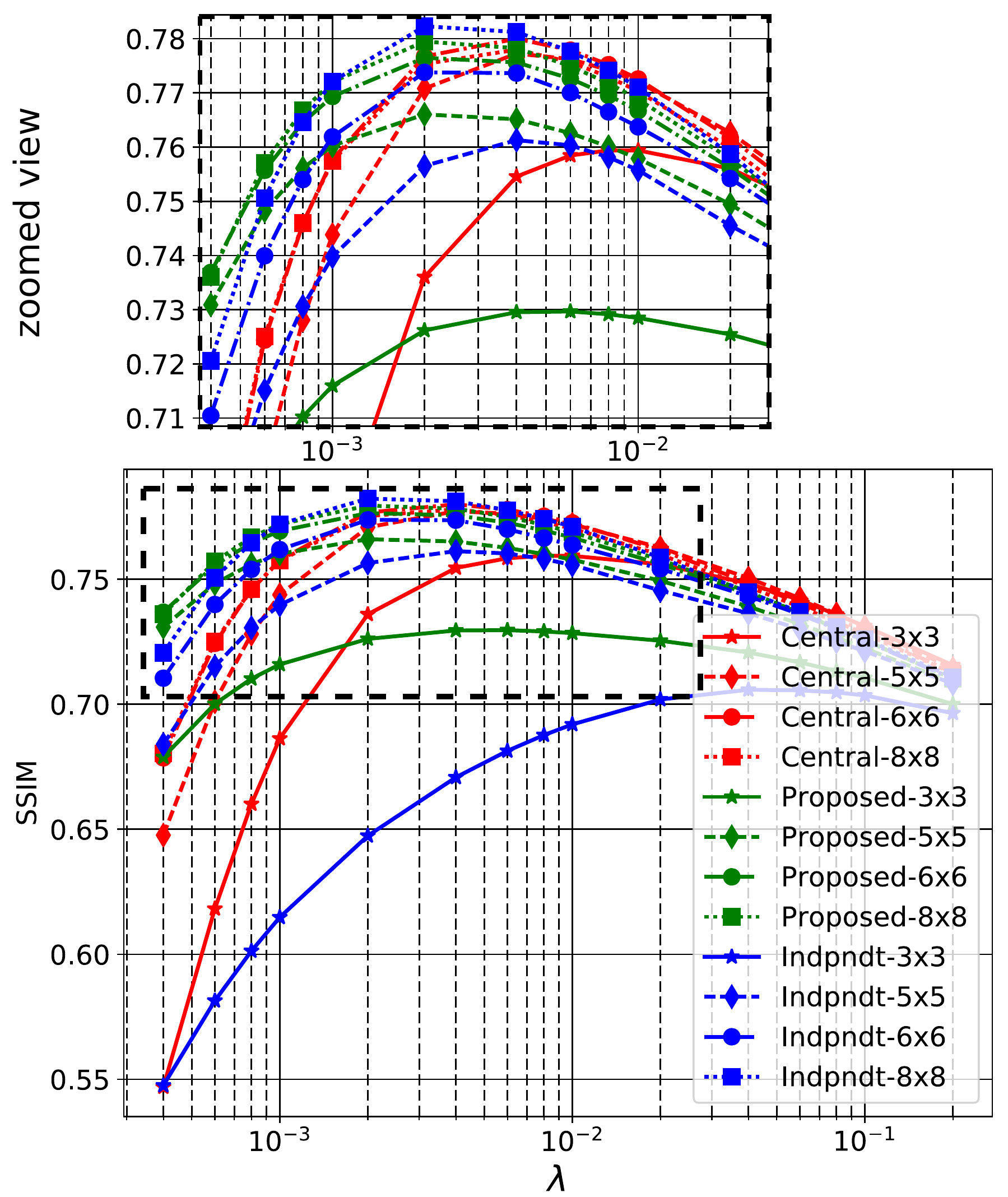}}
	\caption{Experiment 2: results from shift-variant image deblurring comparing the image quality (in terms of SNR and SSIM) obtained by the three different deblurring methods for different strength of regularization. The legends ``\textcolor{red}{Central3x3}'', ``\textcolor{green}{Proposed-3x3}'', ``\textcolor{blue}{Indpndt-3x3}'' denotes the results from the \emph{centralized}, the \emph{proposed}, and \emph{independent} deblurring methods, respectively, when using only $3\times 3$ grid of PSFs. Similarly, other legends denotes for the results obtained when using $5\times 5$, $6 \times 6$ and $8 \times 8$ grid of PSFs sampled in the field-of-view.}
	\label{fig:shiftvariant_SNRvsLambda}
\end{figure}

\begin{figure}
	\centering
	\subfloat[Estimated by \emph{centralized} deblurring (SNR = \textbf{12.3278} dB, SSIM = \textbf{0.7767} at $\lambda = 0.002$).] {\includegraphics[trim = 30mm 10mm 20mm 15mm, clip,width=0.85\linewidth]{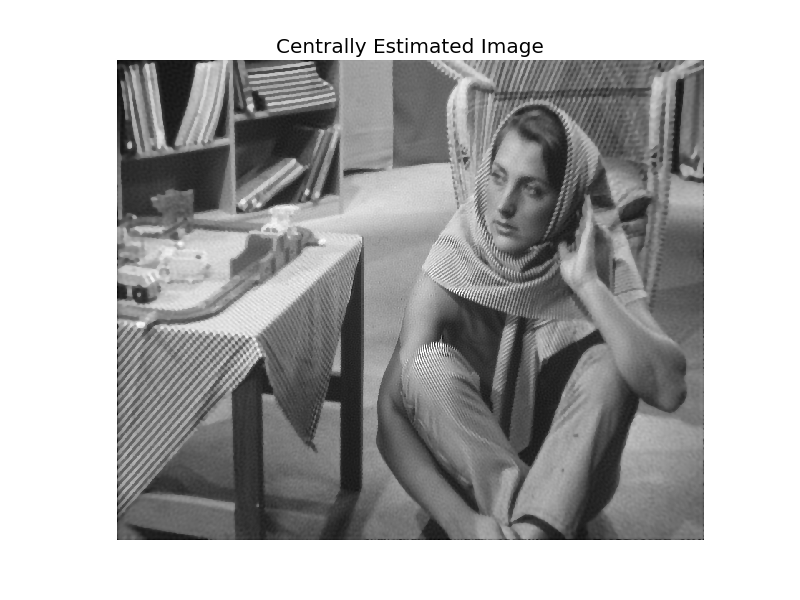}} \\
	\subfloat[Estimated by \emph{independent} deblurring (SNR = 11.6696 dB, SSIM = 0.7736 at $\lambda = 0.004$)] {\includegraphics[trim = 30mm 10mm 20mm 15mm, clip,width=0.85\linewidth]{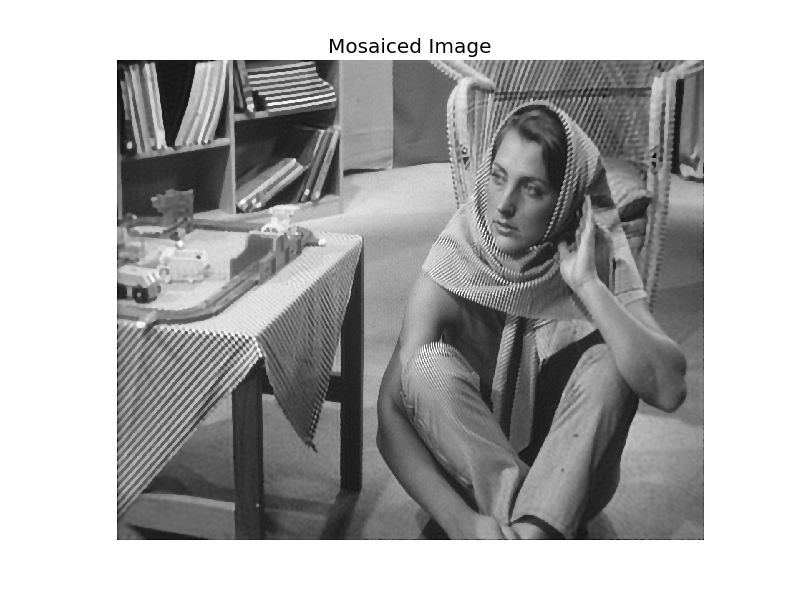}} \\
	\subfloat[Estimated by \emph{proposed} deblurring (SNR = {12.0239} dB, SSIM = {0.7764} at $\lambda = 0.002$)] {\includegraphics[trim = 30mm 10mm 20mm 15mm, clip,width=0.85\linewidth]{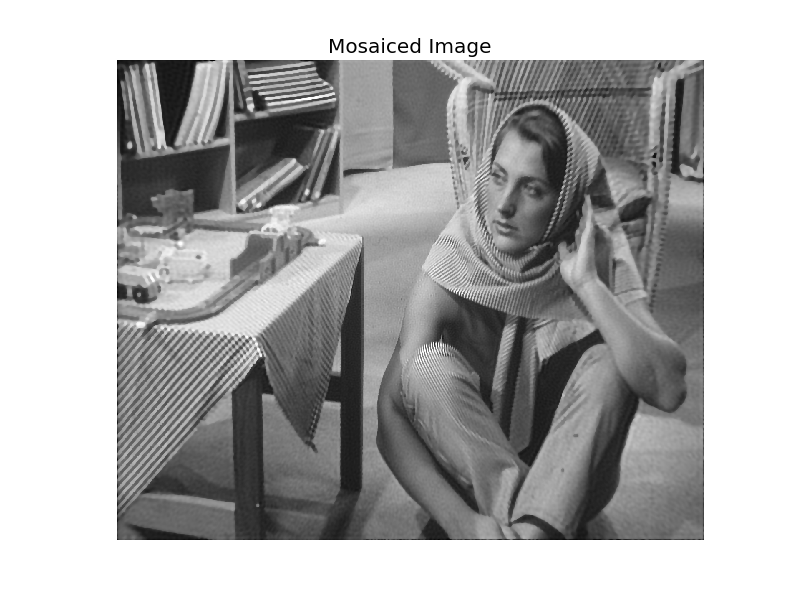}}
	\caption{Experiment 2: deblurred images obtained by the three methods when using $6\times 6$ grid of PSFs.}
	\label{fig:shiftvariant_deblur_result_6x6}
\end{figure}

\section{Conclusion}
\label{sec:conclusion}
In this paper we have proposed a distributed image deblurring algorithm for large images for which the existing \emph{centralized} deblurring methods are practically inapplicable due to requirement of huge physical memory on a single system. The proposed algorithm is rather a generic framework for distributed image deblurring for it can handle different imaging situations such as deblurring a single large image suffering from shift-invariant blur, a wide field-of-view captured into multiple narrow images by different imaging systems with slightly different PSFs, and a large image of a wide field-of-view suffering from smoothly varying blur captured by a single imaging system. Depending upon the application, one can easily adapt it to include different data-fidelity and regularization terms, and then select any fast optimization algorithm to solve the local deblurring problem at the different nodes of a distributed computing system. Our algorithm is efficient in the sense that it is computation intensive rather than being communication intensive. We showed by experimental results on simulated observed images that the \emph{proposed} deblurring algorithm produce almost similar or little lower quality (measured in term of SNR and SSIM) of deblurred images than that obtained by \emph{centralized} deblurring. But, this small compromise in the quality of deblurred image is trade-off by the cost effectiveness of our distributed approach for large images, which is practically not feasible for the \emph{centralized} deblurring methods. Moreover, we compared the \emph{proposed} deblurring to a na\"{\i}ve and computationally cheaper \emph{independent} deblurring, and showed the latter always performed significantly lower than the former. Thus, when high accuracy is desirable, e.g., in astronomical application, the \emph{proposed} deblurring should be preferred over \emph{independent} deblurring method, of course at expense of extra computational cost.

In this paper, we considered nonblind image deblurring, i.e., the PSFs were known a priori, however, in real imaging scenario calibrating PSFs accurately is a tedious and challenging task. A more practical way would be to follow a blind image deblurring approach which is able estimate the PSFs from the observed image(s), and recover the single crisp image. Thus, the next perspective step would be to extend the \emph{proposed} algorithm toward distributed blind image deblurring method which should be able to estimate distributively the PSFs imposing certain regularity among them, and eventually estimate the unknown crisp image.

\bibliographystyle{IEEEtran}
\bibliography{references}

\appendix[Some more simulation results]
\label{sec:appendix}
We repeated the above experiments for different set of images and PSFs, what we referred to as Experiment 3 and 4. This time we chose PSFs that caused less blurriness in the observed images than the previous experiments. For shift-invariant image deblurring, we chose ``Barbara'' image, and a Airy disk PSF of size $201 \times 201$ pixels formed due to a circular aperture of radius $9.5$ pixels. For smooth shift-variant image deblurring, we chose ``Pentagon'' image of size $2048 \times 2048$ pixels, and a grid of shift-variant normalized Gaussian PSFs of size $201 \times 201$ pixels with central PSF having FWHM = $3.5 \times 3.5$ pixels and linearly increasing FWHM in the radial direction up to $8.5 \times 6.5$ pixels for the PSF at extreme corner of the reference image. In both the cases, they dynamic range of the reference image was extended linearly up to $6000$ photons/pixels and the noise in the observed images was white Gaussian with variance $\sigma^2 = 400$ photons/pixels. The results from both new experiments are shown in Fig.~\ref{fig:expmnt3_shift_invariant_deblurring} and Fig.~\ref{fig:expmnt4_shift_variant_deblurring}. Again, we have very similar conclusions from this repeated experiments as from the previous experiments.

\begin{figure*}
	\centering
	\subfloat[SNR vs $\lambda$]
	{\includegraphics[trim = 2mm 0mm 20mm 16mm, clip, width=0.48\linewidth]{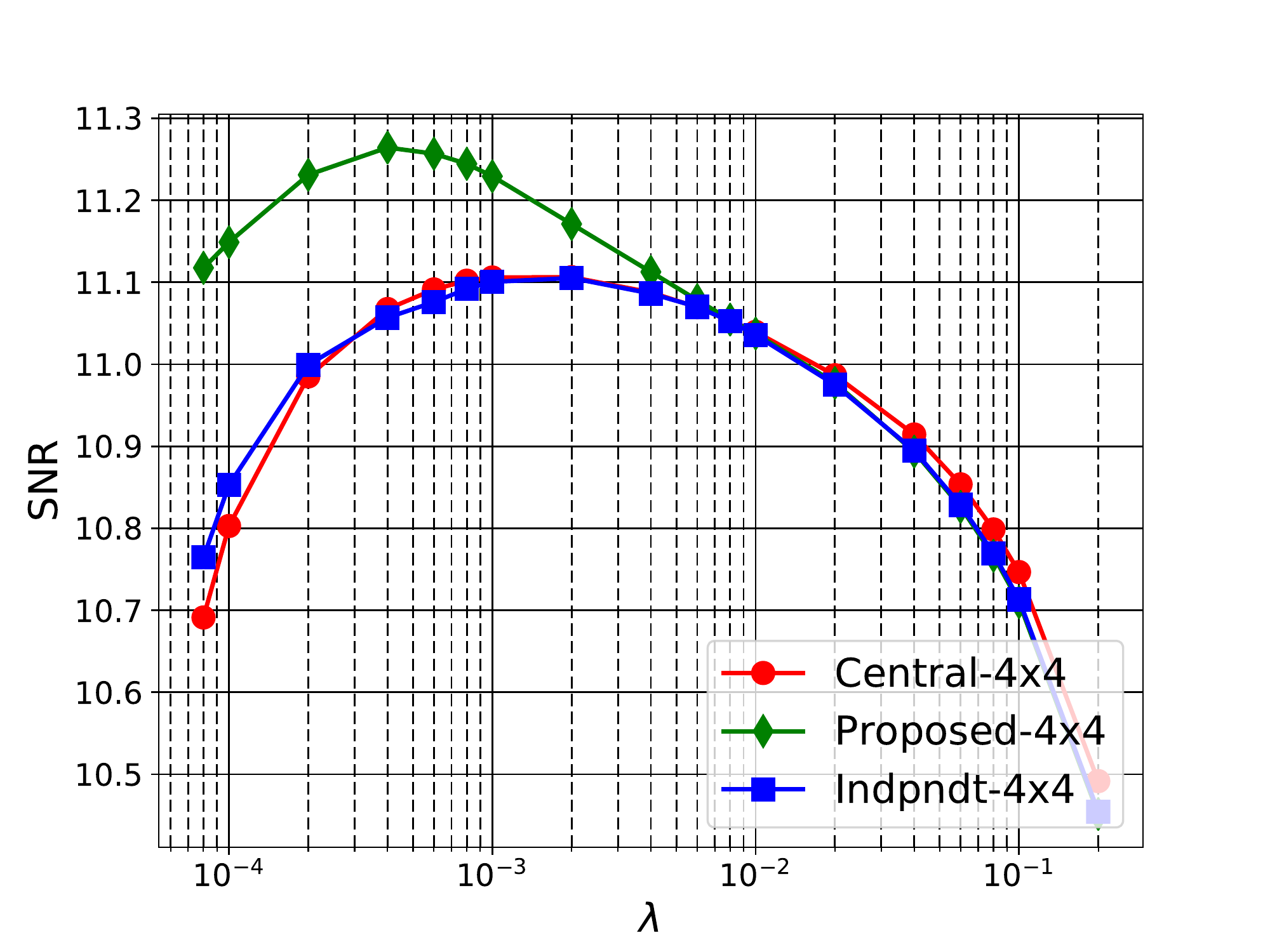}}
	\subfloat[SSIM vs $\lambda$]
	{\includegraphics[trim = 2mm 0mm 20mm 16mm, clip, width=0.48\linewidth]{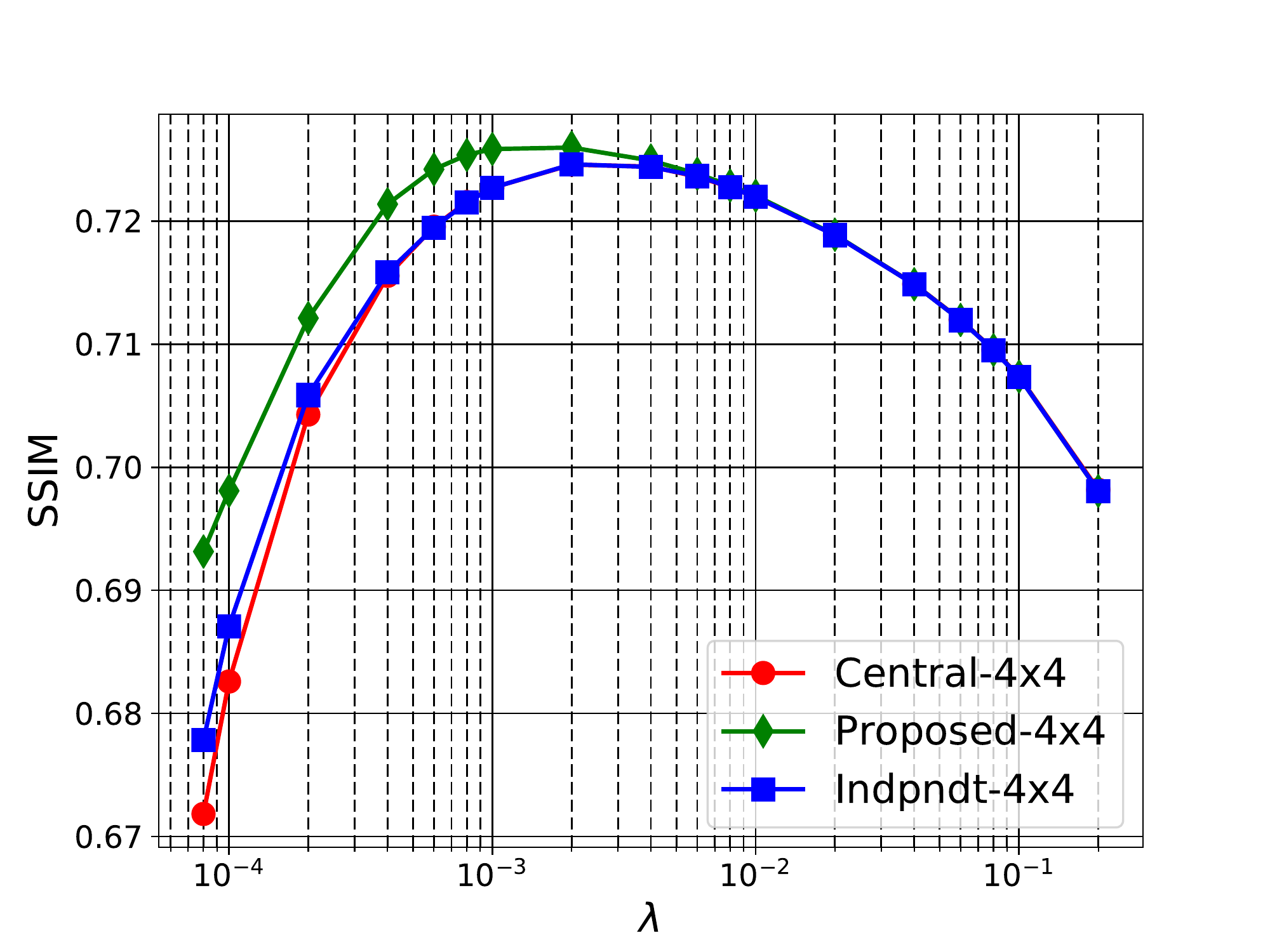}}
	\caption{Experiment 3: results from shift-invariant deblurring of ``Barbara'' image. The \emph{independent} and \emph{proposed} deblurring methods used $4 \times 4$ blocks with overlap of $100 \times 100$ pixels among them. The observed image had SNR = 8.7269 dB and SSIM = 0.6399.}
	\label{fig:expmnt3_shift_invariant_deblurring}
\end{figure*}

\begin{figure}
	\centering
	\subfloat[{SNR vs $\lambda$}]{\includegraphics[trim = 10mm 0mm 10mm 15mm, clip, width=0.98\linewidth]{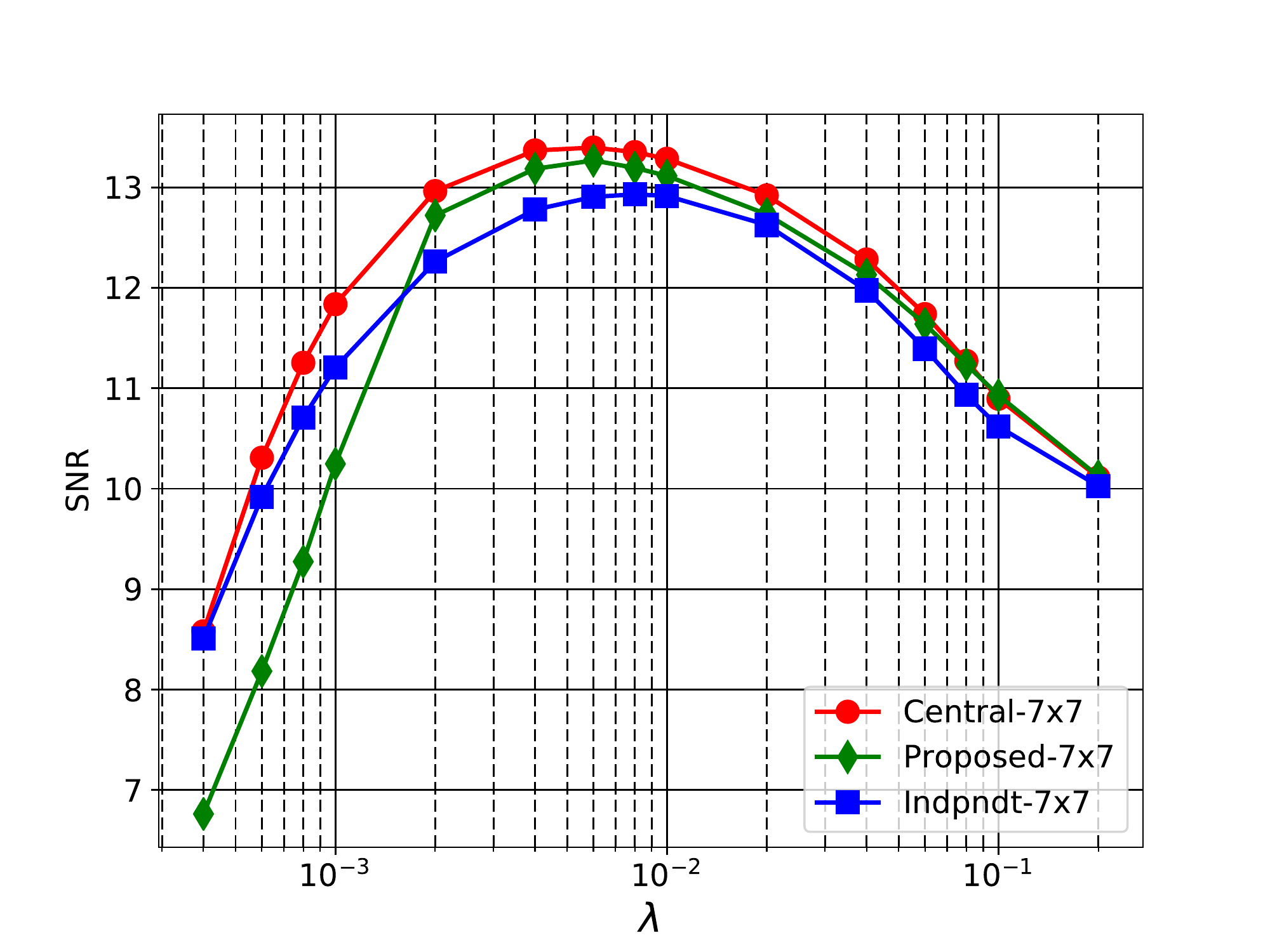}} \\
	\subfloat[{SSIM vs $\lambda$}]{\includegraphics[trim = 0mm 0mm 20mm 15mm, clip, width=0.98\linewidth]{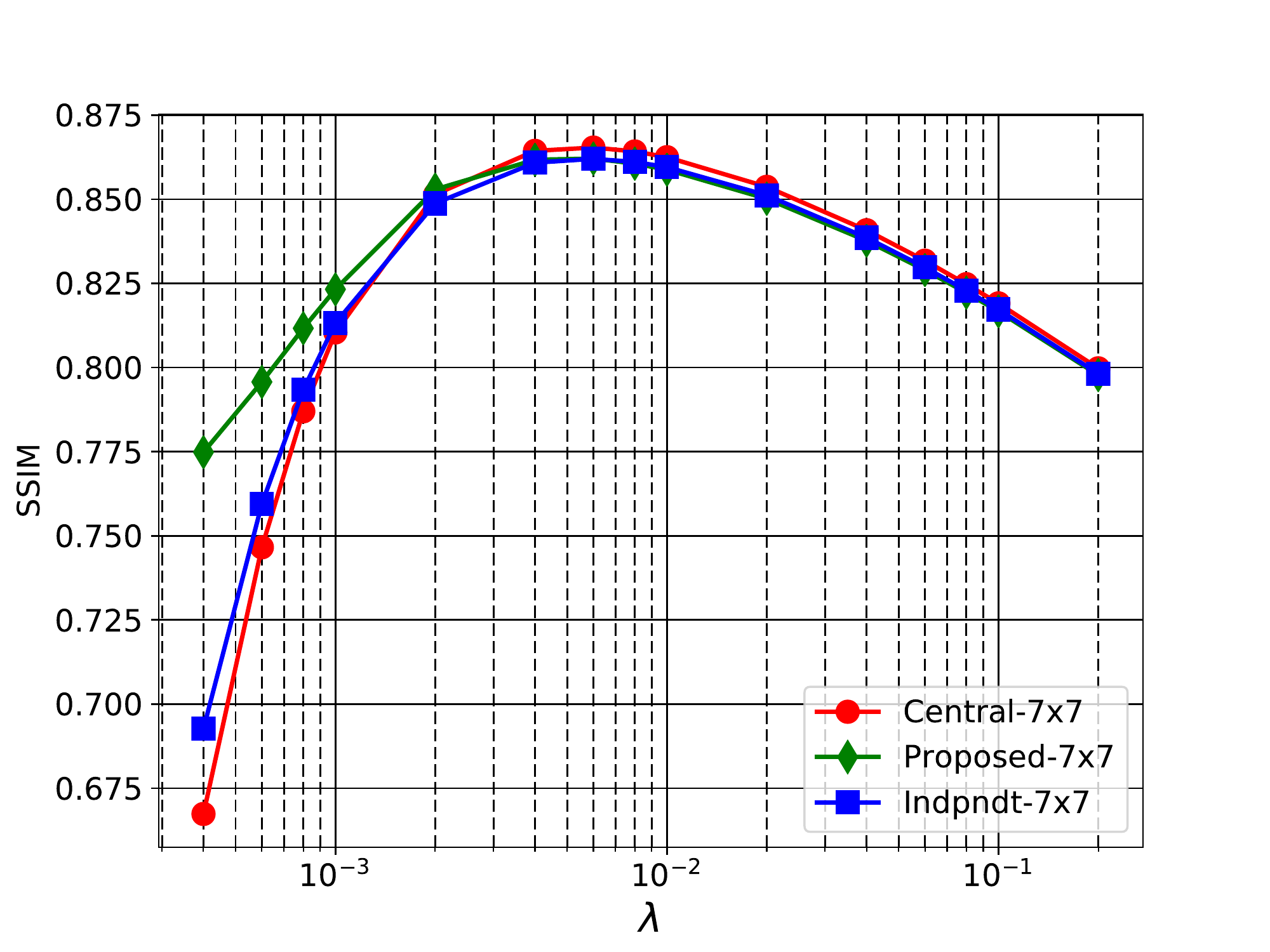}}
	\caption{Experiment 4: results from smooth shift-variant deblurring of ``Pentagon'' image shown below in Fig. \ref{fig:expmnt4_shift_variant_deblurring}. }
\end{figure}

\begin{figure*}
	\centering
	\subfloat[{Reference image (size = $2048 \times 2048$ pixels)}]{\includegraphics[trim = 0mm 0mm 0mm 0mm, clip, width=0.36\linewidth]{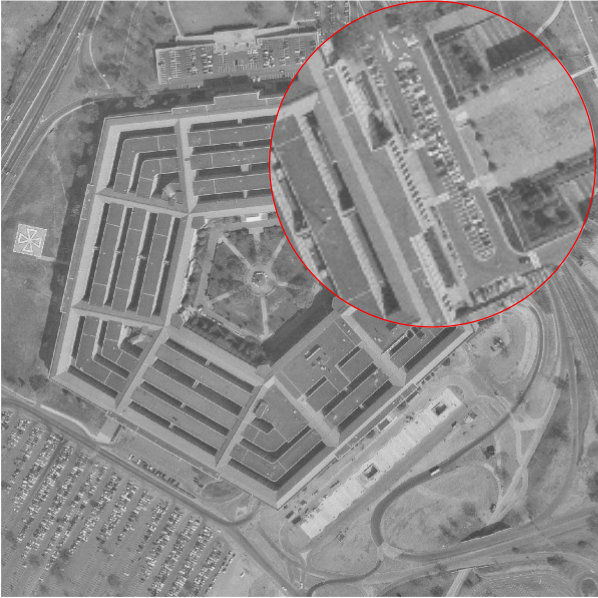}} \;
	\subfloat[{Observed image SNR = 8.71283 dB, SSIM = 0.7496}]{\includegraphics[trim = 0mm 0mm 0mm 0mm, clip, width=0.36\linewidth]{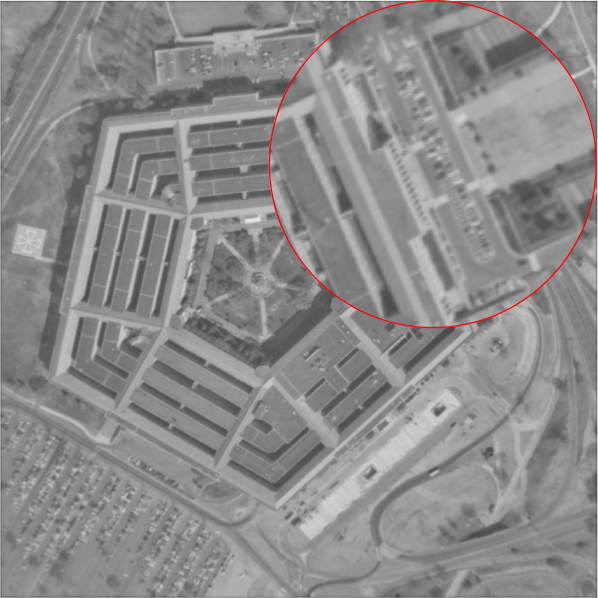}} \\
	\subfloat[{Estimated by \emph{centralized} deblurring (SNR = \textbf{13.3965} dB, SSIM = \textbf{0.8653})}]{\includegraphics[trim = 0mm 0mm 0mm 0mm, clip, width=0.36\linewidth]{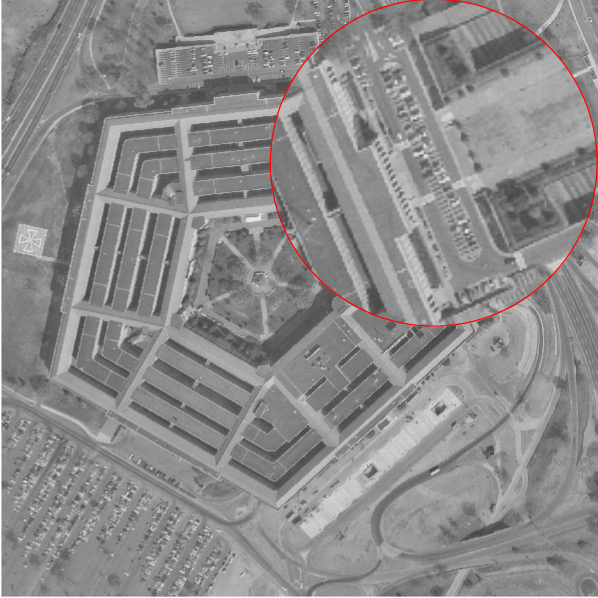}} \;
	\subfloat[{Estimated by \emph{independent} deblurring (SNR = 12.9053 dB, SSIM = 0.8619)}]{\includegraphics[trim = 0mm 0mm 0mm 0mm, clip, width=0.36\linewidth]{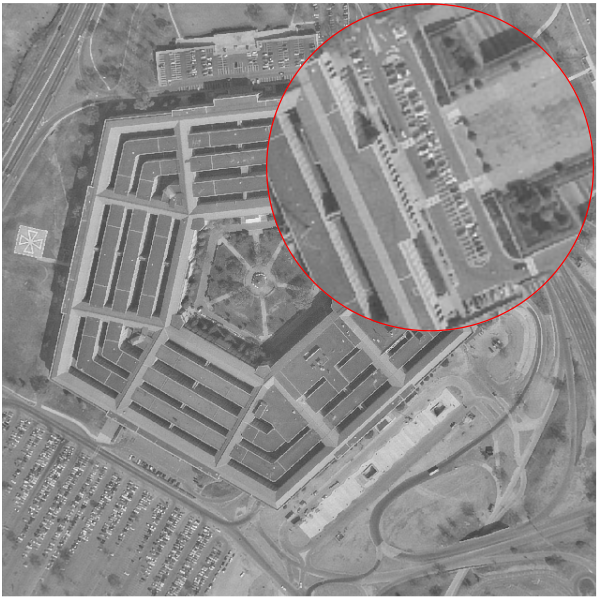}} \\
	\subfloat[{Estimated image blocks by \emph{proposed} deblurring}]{\includegraphics[trim = 0mm 0mm 0mm 0mm, clip, width=0.36\linewidth]{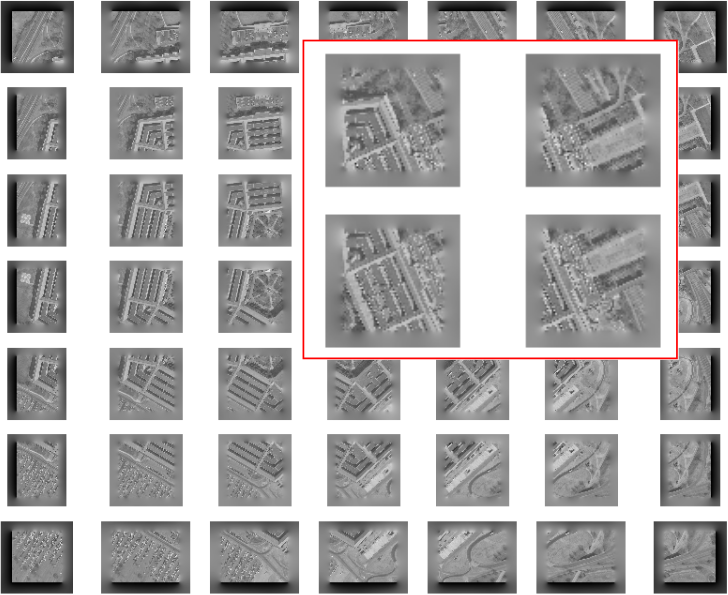}} \;
	\subfloat[{Estimated by \emph{proposed} deblurring (SNR = 13.2979 dB, SSIM = 0.8618)}]{\includegraphics[trim = 0mm 0mm 0mm 0mm, clip, width=0.36\linewidth]{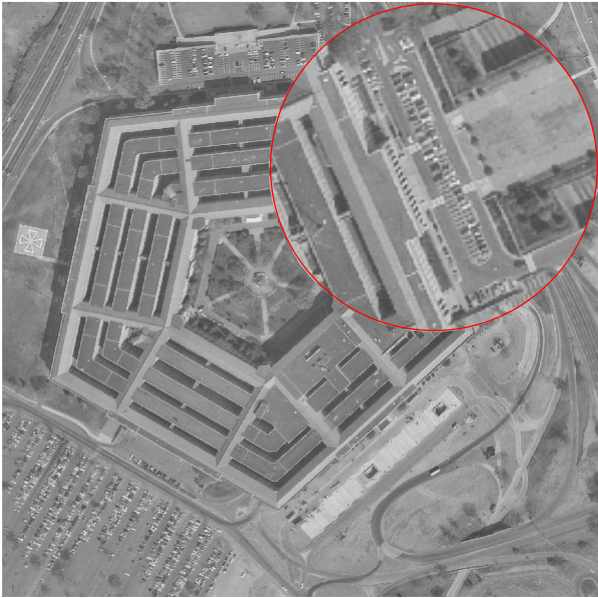}} \;
	\caption{Experiment 4: experimental setup and results from smooth shift-variant deblurring of ``Pentagon'' image. The observed image is generate by blurring with a $9 \times 9$ grid of  shift-variant normalized Gaussian PSFs having FWHM=$3.5 \times 3.5$ pixels in the center and linearly increasing FWHM in the radial direction up to $8.5 × 6.5$ pixels for the PSF at extreme corner. The deblurred image are estimated using only $7 \times 7$ grid of PSFs sampled within the field-of-view.}
	\label{fig:expmnt4_shift_variant_deblurring}
\end{figure*}

%
%
%
%

\end{document}